\numberwithin{equation}{section}
\newtheorem{theorem}{Theorem}[section]
\newtheorem{conjecture}[theorem]{Conjecture}
\newtheorem{lemma}[theorem]{Lemma}
\renewcommand{\and}{\quad\text{and}\quad}
\newcommand{\iup}{\textup{i}}
\newcommand{\A}{\textup{A}}
\DeclareMathOperator*{\RL}{L}
\newcommand{\Nat}{\mathbb N}
\newcommand{\Z}{\mathbb Z}
\newcommand{\Symm}{\mathfrak{S}}
\newcommand{\abs}[1]{\lvert#1\rvert}
\newcommand{\la}{\lambda}
\newcommand{\qbin}[2]{\genfrac{[}{]}{0pt}{}{#1}{#2}}
\newcommand{\g}{\mathfrak{g}}
\newcommand{\h}{\mathfrak{h}}
\newcommand{\n}{\mathfrak{n}}
\newcommand{\vb}{\boldsymbol{v}}
\newcommand{\mb}{\boldsymbol{m}}
\newcommand{\wb}{\boldsymbol{w}}
\newcommand{\ub}{\boldsymbol{u}}
\newcommand{\xb}{\boldsymbol{x}}
\newcommand{\rb}{\boldsymbol{r}}
\newcommand{\rhob}{\boldsymbol{\rho}}
\renewcommand{\Im}{\textup{Im}}
\newcommand{\dup}{\textup{d}}
\begin{document}

\title[Dedekind's $\eta$-function 
and Rogers--Ramanujan identities]
{Dedekind's $\boldsymbol{\eta}$-function and 
Rogers--Ramanujan identities}

\author{S. Ole Warnaar and Wadim Zudilin}
\thanks{Work supported by the Australian Research Council}

\address{School of Mathematics and Physics, 
The University of Queensland, Brisbane, QLD 4072, Australia}
\address{School of Mathematical and Physical Sciences, 
University of Newcastle, Callaghan, NSW 2308, Australia}

\subjclass[2000]{05A19, 05E05, 11F20, 33D67}

\begin{abstract}
We prove a $q$-series identity that generalises Macdonald's 
$\A_{2n}^{(2)}$ $\eta$-function identity and the Rogers--Ra\-ma\-nu\-jan 
identities.  We conjecture our result to generalise even further
to also include the Andrews--Gordon identities.
\end{abstract}

\maketitle

\section{Introduction}
In 1972 Macdonald published his seminal paper \cite{Macdonald72}
in which he extended Weyl's denominator formula for classical
reduced root systems to root systems of affine type.
These identities, which include the Jacobi triple product identity
and the quintuple product identity as the special cases
$\A_1^{(1)}$ and $\A_2^{(2)}$,
are now commonly known as the Macdonald identities. 
Through the procedure of specialisation
the Macdonald identities imply identities for powers of 
the Dedekind $\eta$-function
\[
\eta(\tau)=q^{1/24}\prod_{j=1}^{\infty}(1-q^j),
\]
where $q=\exp(2\pi\iup\tau)$ and $\Im(\tau)>0$.
For example, the specialisation \cite[p. 138, (6)(c)]{Macdonald72}
of the $\A_{2n}^{(2)}$ Macdonald identity corresponds the following 
beautiful generalisation of the Euler pentagonal number theorem:
\begin{equation}\label{eta}
\eta(\tau)^{2n^2-n}=
\sum \xi(\vb/\rhob) 
(-1)^{\abs{\vb}-\abs{\rhob}}
q^{||\vb||^2/(2(2n+1))}.
\end{equation}
Here $\vb=(v_1,\dots,v_n)$, $\rhob=(1/2,3/2,\dots,n-1/2)$,
$\abs{\vb}=v_1+\cdots+v_n$, 
$||\vb||^2=\vb\cdot\vb=v_1^2+\cdots+v_n^2$,
\[
\xi(\vb/\wb)=
\prod_{1\leq i<j\leq n} \frac{v_i^2-v_j^2}{w_i^2-w_j^2}
\]
and the sum on the right of \eqref{eta} is over $\vb\in(\Z/2)^n$
such that $v_i\equiv \rho_i\pmod{2n+1}$.

Another famous family of combinatorial identities are the formulae
of Rogers and Ramanujan \cite{Andrews74b,Rogers94}
\begin{subequations}\label{RR}
\begin{align}\label{RR1}
\sum_{m=0}^{\infty}\frac{q^{m^2}}{(q)_m}&=
\frac{(q^2,q^3,q^5;q^5)_{\infty}}{(q)_{\infty}} \\
\sum_{m=0}^{\infty}\frac{q^{m^2+m}}{(q)_m}&=
\frac{(q,q^4,q^5;q^5)_{\infty}}{(q)_{\infty}}
\label{RR2}
\end{align}
\end{subequations}
and their generalisations to arbitrary odd moduli due to Andrews 
and Gordon \cite{Andrews74,Gordon61}
\begin{equation}\label{AG}
\sum_{m_1,\dots,m_{k-1}} 
\frac{q^{M_1^2+\cdots+M_{k-1}^2+M_p+\cdots+M_{k-1}}}
{(q)_{m_1}\cdots (q)_{m_{k-1}}}
=\frac{(q^p,q^{2k-p+1},q^{2k+1};q^{2k+1})_{\infty}}{(q)_{\infty}},
\end{equation}
where $1\leq p\leq k$ and $M_i=m_i+\cdots+m_{k-1}$.
In \eqref{RR} and \eqref{AG} we employ the standard $q$-notation
\[
(a)_m=(a;q)_m=\prod_{i=1}^m(1-aq^{i-1})
\]
and
\[
(a_1,\dots,a_k)_m=(a_1,\dots,a_k;q)_m=
(a_1;q)_m\cdots(a_k;q)_m
\]
for $m\in\Nat\cup\{\infty\}$ (with the convention that $\Nat=\{0,1,2,\dots\}$).

\medskip
 
In this paper we link
Macdonald's identity \eqref{eta} to the Rogers--Ramanujan and Andrews--Gordon
identities \eqref{RR} and \eqref{AG}. 
More specifically, we present a family of $q$-series identities
depending on positive integers $k$, $n$ and $p\in\{1,k\}$ such that
\begin{enumerate}
\item For $k=1$ we recover Macdonald's $\A_{2n}^{(2)}$ identity \eqref{eta}
for the Dedekind $\eta$-function.
\item For $n=1$ and $k=2$ we recover, modulo the Jacobi triple 
product identity, the Rogers--Ramanujan identities \eqref{RR}.
\item For $n=1$ and general $k$ we recover the $p=k$ and $p=1$ 
instances of the Andrews--Gordon identities \eqref{AG}.
\item For general $n$ and $k\to\infty$ we recover the $\A_{2n-1}$ 
case of an identity of Hua related to Kac's conjectures for 
representations of quivers.
\end{enumerate}

The fact that the Rogers--Ramanujan identities have a close connection
with affine root systems or, more generally, affine Kac--Moody algebras 
is not new,
and well-known are the interpretations of \eqref{RR} and \eqref{AG}
in terms of standard modules of $\A_1^{(1)}$, see e.g.,
\cite{LM78,LW78,LW82,LW84,Misra84} and as characters corresponding to certain
non-unitary Virasoro modules, see e.g., \cite{FS94,RC85}.
For our generalisation of the Rogers--Ramanujan and Andrews--Gordon identities,
however, it is crucial to interpret the right-hand sides of \eqref{RR}
and \eqref{AG} as of type type $\A_2^{(2)}$, not $\A_1^{(1)}$.

\medskip

Before stating our main results we observe that, by an appeal to 
Jacobi's triple product identity \cite[(II.28)]{GR04}, the right-hand side
of \eqref{AG} may be rewritten as
\begin{multline*}
\frac{1}{(q)_{\infty}}\sum_{j=-\infty}^{\infty}
(-1)^j q^{(2k+1)\binom{j}{2}+pj} \\ 
=\frac{q^{-(2k-2p+1)^2/(8(2k+1))}}{(q)_{\infty}}
\sum (-1)^{v-k+p-1/2} q^{v^2/(2(2k+1))},
\end{multline*}
where in the second expression the sum is over 
$v\in\Z/2$ such that $v\equiv k-p+1/2\pmod{2k+1}$.
Comparing the sum on the right with the sum in \eqref{eta} it
takes little imagination to make the following conjecture.

For $1\leq a,b\leq N-1$, let $C_{ab}$ be the Cartan integers of 
the Lie algebra $\A_{N-1}$, i.e., $C_{aa}=2$, $C_{a,a\pm 1}=-1$ and 
$C_{ab}=0$ otherwise. By abuse of notation, for $\wb=(w_1,\dots,w_n)$
and $a$ a scalar, set $\wb+a=(w_1+a,\dots,w_n+a)$ so that, in
particular,
\[
||\wb+a||^2=||\wb||^2+2a\abs{\wb}+na^2
\quad\text{and}\quad
\abs{\wb+a}=\abs{\wb}+na.
\]

\begin{conjecture}\label{conj1}
For $k,n$ positive integers, $N=2n$ and $p\in\{1,k\}$,
\begin{multline}\label{eqc1}
\sum
\frac{q^{\frac{1}{2}\sum_{a,b=1}^{N-1}\sum_{i=1}^{k-1}
C_{ab}M_i^{(a)}M_i^{(b)}+
\sum_{a=1}^{N-1}\sum_{i=p}^{k-1} (-1)^a M_i^{(a)}}}
{\prod_{a=1}^{N-1}\prod_{i=1}^{k-1}(q)_{m_i^{(a)}}} \\
=\frac{1}{(q)_{\infty}^{2n^2-n}}
\sum \xi(\vb/\rhob) (-1)^{\abs{\vb}-\abs{\rhob+k-p}}
q^{\tfrac{||\vb||^2-||\rhob+k-p||^2}{2(2k+2n-1)}},
\end{multline}
where the sum on the left is over $m_i^{(a)}\in\Nat$
\textup{(}for all $1\leq a\leq N-1$ and $1\leq i\leq k-1$\textup{)}
and the sum on the right is over $\vb\in(\Z/2)^n$ such that
$v_i\equiv \rho_i+k-p\pmod{2k+2n-1}$. 
The integers $M_i^{(a)}$ are defined as 
$M_i^{(a)}=m_i^{(a)}+\cdots+m_{k-1}^{(a)}$, i.e.,
$m_i^{(a)}=M_i^{(a)}-M_{i+1}^{(a)}$ for $1\leq i\leq k-2$
and $m_{k-1}^{(a)}=M_{k-1}^{(a)}$.
\end{conjecture}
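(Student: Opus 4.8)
The plan is to derive \eqref{eqc1} from a \emph{higher-rank Bailey chain}. The guiding observations are that its right side is Macdonald's identity \eqref{eta} when $k=1$, its left side is the Andrews--Gordon multi-sum \eqref{AG} when $n=1$, and \eqref{AG} is the standard output of iterating Bailey's lemma. So I would first fix a seed Bailey pair of type $\A_{2n-1}$ whose $\alpha$-weights, summed along the chain and sent to a limit, reproduce the $\A_{2n}^{(2)}$ theta sum of \eqref{eta} together with its Vandermonde factor $\xi(\vb/\rhob)$, and then apply a rank-$(2n-1)$ Bailey step $k-1$ times. Each step should adjoin one layer of summation variables $(m_i^{(1)},\dots,m_i^{(2n-1)})$ and, once the index is expressed through the partial sums $M_i^{(a)}$, add exactly one copy of the $\A_{2n-1}$ quadratic form $\tfrac12\sum_{a,b}C_{ab}M_i^{(a)}M_i^{(b)}$ to the exponent and one factor $\prod_a(q)_{m_i^{(a)}}^{-1}$ to the summand; the linear term $\sum_a\sum_{i\geq p}(-1)^aM_i^{(a)}$ should come from replacing the ordinary step by a base-changing \emph{Bailey lattice} step below layer $p$, so that the two admissible $p\in\{1,k\}$ correspond to performing that variant at the bottom of the chain or not at all, matching the two shifts $\rhob\mapsto\rhob+k-p$ on the right. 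A final passage of the truncating parameter to infinity would replace the last remaining factor by $(q)_\infty^{-(2n^2-n)}$ --- the exponent being the number of positive roots of $\A_{2n-1}$ --- after which the $\beta$-side of the identity is the left side of \eqref{eqc1} and, by the choice of seed, the $\alpha$-side is the right side.

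Two cases would anchor the scheme. For $k=1$ the chain is empty, and by \eqref{eta} together with the elementary relation $||\rhob||^2/(2(2n+1))=(2n^2-n)/24$ the right side of \eqref{eqc1} equals $\eta(\tau)^{2n^2-n}/\bigl(q^{(2n^2-n)/24}(q)_\infty^{2n^2-n}\bigr)=1$, which is the empty left side. For $k=2$ only a \emph{single} Bailey step is needed, and this step should reduce to a very-well-poised summation of rank $2n-1$ --- a multivariable analogue of Bailey's ${}_6\phi_5$ sum --- which can be established directly; that already gives \eqref{eqc1} for all $n$ at $k=2$, in particular the Rogers--Ramanujan identities \eqref{RR} at $n=1$, and would be the natural first theorem. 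At $n=1$ and arbitrary $k$ the chain is the classical rank-one Bailey chain with the quintuple-product seed, which returns \eqref{AG}, and the limit $k\to\infty$ returns Hua's $\A_{2n-1}$ identity, matching item (4); these cases pin down all the normalisations.

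The hard part will be the iteration for $n\geq2$, $k\geq3$. What is required is a rank-$(2n-1)$ Bailey lemma that (a) composes with itself, (b) contributes exactly the $\A_{2n-1}$ quadratic form at each step, and --- hardest of all --- (c) carries the Vandermonde weight $\xi(\vb/\rhob)$ unchanged along the entire chain, so that Macdonald's identity may be applied at the top to identify the limiting $\alpha$-side with the right side of \eqref{eqc1}. No Bailey lemma with all three properties seems to be available: the known $\A_r$ Bailey transforms do not obviously produce (b) with this Cartan matrix, and the persistence of $\xi(\vb/\rhob)$ under repeated transformation is precisely the structural feature that is automatic when $n=1$ and that appears to fail --- or at least is not understood --- in general. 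One might instead aim to show that both sides of \eqref{eqc1} obey the same relation in $k$ --- the left side does, by the step that carries the $(k-1)$-layer sum to the $k$-layer one --- but proving that relation for the Macdonald-type sum on the right, i.e., that the modulus-$(2k+2n+1)$ theta sum is the Bailey transform of the modulus-$(2k+2n-1)$ one, is tantamount to the same higher-rank Bailey lemma. A third route, realising the left side as a fermionic formula for an $\A_{2n}^{(2)}$-type branching function and the right side as its bosonic form, would merely relocate the difficulty into the representation theory of affine Kac--Moody algebras. For these reasons I expect $k\leq2$ to be provable outright while $k\geq3$ stays conjectural.
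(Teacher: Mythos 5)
The statement you are asked about is a \emph{conjecture}: the paper itself proves it only for $k=1$ (where it is Macdonald's identity \eqref{eta}) and $k=2$ (Theorem~\ref{thm}), and leaves $k\geq 3$ open. Your assessment of the provable range is therefore exactly right, and your $k=1$ check (including the identity $||\rhob||^2/(2(2n+1))=(2n^2-n)/24$) is correct. Your diagnosis of \emph{why} $k\geq 3$ is hard --- the absence of a rank-$(2n-1)$ Bailey-type iteration that both produces the $\A_{2n-1}$ Cartan form and preserves the weight $\xi(\vb/\rhob)$ --- is a fair description of the obstruction, and your proposed alternative (a common recursion in $k$ for both sides, or a bosonic--fermionic reading) is consistent with the paper's own supporting evidence, which is asymptotic (dilogarithm identities) rather than structural.

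For the one case that is actually proved, however, your route differs from the paper's in an instructive way, and the difference is precisely where your sketch has a gap. You posit that a single higher-rank Bailey step ``adds exactly one copy of the $\A_{2n-1}$ quadratic form $\frac{1}{2}\sum_{a,b}C_{ab}M_i^{(a)}M_i^{(b)}$''; in the paper this is \emph{not} delivered by the Bailey-type machinery at all. The summation actually used is Milne's $\mathrm{C}_n$ Rogers--Selberg identity (Theorem~\ref{thmMilne}) --- so the natural rank is $n$, not $2n-1$ --- and its ``$\beta$-side'' is the single sum $\sum_{m\geq 0}q^mQ'_{(2^m)}(\xb)/(q)_m$ in modified Hall--Littlewood polynomials. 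The $\A_{2n-1}$ Cartan form only emerges afterwards, from a separate combinatorial identity (Theorem~\ref{thmA}) expanding $Q'_{(2^m)}(1^n)$ via Lascoux's formula for $Q'_{\la/\mu}(1)$, the $q$-binomial form \eqref{qb}, and the $q$-Chu--Vandermonde sum; the doubling of the rank from $n$ to $N-1=2n-1$ is an artefact of the length-$n$ chain of partitions \eqref{chain} inside a shape with parts at most $2$. The $\xb\to(1^n)$ (resp.\ $\xb\to(q^{1/2},q^{-1/2},\dots)$) limit of the ``$\alpha$-side'', organised by Weyl-group orbits, then produces the theta sum with the factor $\xi(\vb/\rhob)$. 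So if you want to carry out your $k=2$ step, the concrete missing ingredient is this Hall--Littlewood bridge between the $\mathrm{C}_n$ very-well-poised sum and the $\A_{2n-1}$ fermionic form --- and it is also the reason the iteration does not obviously continue to $k\geq 3$: one would need an analogue of Theorem~\ref{thmA} with $(2^m)$ replaced by $(k^m)$ together with a matching $\mathrm{C}_n$ Andrews--Bailey chain above Milne's identity.
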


\begin{theorem}[Generalised Rogers--Ramanujan identities]\label{thm}
Conjecture~\ref{conj1} is true for $k=2$. That is, 
for $n$ a positive integer and $N=2n$,
\begin{equation}\label{RR1n}
\sum_{\mb\in\Nat^{N-1}}
\frac{q^{\frac{1}{2}\mb C \mb^t}}{(q)_{\mb}}
=\frac{1}{(q)_{\infty}^{2n^2-n}}
\sum \xi(\vb/\rhob) (-1)^{\abs{\vb}-\abs{\rhob}}
q^{\tfrac{||\vb||^2-||\rhob||^2}{2(2n+3)}},
\end{equation}
where the sum on the right is over $\vb\in(\Z/2)^n$ such that
$v_i\equiv\rho_i\pmod{2n+3}$,
and
\begin{equation*}
\sum_{\mb\in\Nat^{N-1}}
\frac{q^{\frac{1}{2}\mb C \mb^t+\abs{\mb}_{-}}}{(q)_{\mb}}
=\frac{1}{(q)_{\infty}^{2n^2-n}}
\sum \xi(\vb/\rhob) (-1)^{\abs{\vb}-\abs{\rhob+1}}
q^{\tfrac{||\vb||^2-||\rhob+1||^2}{2(2n+3)}},
\end{equation*}
where the sum on the right is over $\vb\in(\Z/2)^n$ such that
$v_i\equiv \rho_i+1\pmod{2n+3}$.
In the above $(q)_{\mb}=(q)_{m_1}\dots(q)_{m_{N-1}}$,
$C$ is the Cartan matrix of $\A_{N-1}$, i.e.,
\[
\frac{1}{2}\mb C\mb^t=
\sum_{i=1}^{N-1}m_i^2-\sum_{i=1}^{N-2}m_im_{i+1},
\]
and, for $\mb\in\Nat^{N-1}$, 
\[
\abs{\mb}_{-}=\sum_{i=1}^{N-1} (-1)^{i-1} m_i.
\] 
\end{theorem}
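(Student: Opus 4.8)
The plan is to deduce both identities of Theorem~\ref{thm} from a single bosonic--fermionic $q$-series identity. The $p=1$ and $p=2$ cases differ only by the linear term $\abs{\mb}_{-}$ on the left and the shift $\rhob\mapsto\rhob+1$ on the right, and I expect the former to follow from the latter by repeating the argument with a correspondingly shifted input; so I describe the argument for \eqref{RR1n}. First I would rewrite the right-hand side. Applying the Jacobi triple product identity \cite{GR04} to resum each coordinate $v_i$ over its residue class modulo $2n+3$ --- exactly as in the computation preceding Conjecture~\ref{conj1} --- the alternating $\vb$-sum with weight $\xi(\vb/\rhob)$ becomes an explicit theta function (an $\A_{2n}^{(2)}$-type Weyl--Kac numerator) of modulus $2n+3$, divided by the constant $\prod_{i<j}(\rho_i^2-\rho_j^2)$. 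Substituting Macdonald's identity \eqref{eta} in the equivalent form $(q)_\infty^{2n^2-n}=q^{-(2n^2-n)/24}\sum\xi(\vb/\rhob)(-1)^{\abs{\vb}-\abs{\rhob}}q^{||\vb||^2/(2(2n+1))}$ to clear the prefactor $1/(q)_\infty^{2n^2-n}$, the claim becomes: the fermionic sum on the left of \eqref{RR1n} equals a ratio of two explicit theta series, of moduli $2n+3$ and $2n+1$ --- the ``level $2$'' counterpart of the ``level $0$'' identity \eqref{eta}.

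The core of the proof is this bosonic--fermionic identity. For $n=1$ it is exactly one of \eqref{RR} after the rewriting of the product side recorded in the Introduction, so the new content is general $n$. I would establish it by a Bailey-type transformation adapted to the root system $\A_{2n-1}$ --- equivalently, a multivariable Rogers--Ramanujan transformation of Watson/Bailey type. One starts from a ``unit'' Bailey pair in which the index $\mb$ runs over $\Nat^{2n-1}$, the $\beta$-component is $\prod_a\delta_{m_a,0}$, and the $\alpha$-component is a theta-like term built from the Weyl denominator $\prod_{i<j}(x_i-x_j)$ of $\A_{2n-1}$ evaluated at the ``doubled'' point $\{x_1,\dots,x_{2n}\}=\{q^{v_1},\dots,q^{v_n},q^{-v_1},\dots,q^{-v_n}\}$, so that it collapses to an $n$-fold sum over $\vb\in(\Z/2)^n$ with factor $\prod_{i<j}(v_i^2-v_j^2)$; a \emph{single} application of the Bailey step --- this is where $k=2$ enters, and where the argument does not immediately extend to $k>2$ --- turns $\beta$ into the fermionic weight $q^{\frac12\mb C\mb^t}/(q)_\mb$, and letting $\mb\to\infty$ in the defining relation of a Bailey pair produces precisely the theta ratio of the previous paragraph. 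Running the same argument with the input shifted by $1$ (i.e.\ $\rhob+1$, $p=1$) reinstates $\abs{\mb}_{-}$ on the left and yields the second identity. An alternative for this step is to recognise the theta ratio as a level-$2$ $\A_{2n}^{(2)}$ branching function and invoke a known fermionic formula for it; either way the analytic content is the same.

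The hard part will be this bosonic--fermionic identity, a genuine multivariable Rogers--Ramanujan identity. Within it, the decisive check is that the root-system data emerging from the Bailey iteration --- the even Weyl factor $\prod_{i<j}(v_i^2-v_j^2)$ obtained by folding $\prod_{i<j}(x_i-x_j)$ through the substitution above, together with the exact power of $q$, the sign $(-1)^{\abs{\rhob}}$ and the normalising constant $q^{-||\rhob||^2/(2(2n+3))}$ --- matches \emph{exactly} the combination $\xi(\vb/\rhob)$ of modulus $2n+3$ produced in the first step, and in particular that the number of free summation indices drops correctly from $2n-1$ on the fermionic side to $n$ on the theta side. Carrying out this reconciliation, and tracking every $q$-power and sign through the resummation by residues, is the crux; once it is done, \eqref{RR1n} and its $p=1$ companion follow.
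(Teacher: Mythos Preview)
Your strategy diverges from the paper's, and as written it does not close. Two concrete problems.

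First, the opening step does not work for $n>1$. You propose to ``resum each coordinate $v_i$ over its residue class modulo $2n+3$'' by Jacobi's triple product, ``exactly as in the computation preceding Conjecture~\ref{conj1}''. That computation is for $n=1$. For $n\geq 2$ the factor $\xi(\vb/\rhob)=\prod_{i<j}(v_i^2-v_j^2)/\prod_{i<j}(\rho_i^2-\rho_j^2)$ couples all coordinates, so the sum is not a product of one-variable theta sums and Jacobi cannot be applied coordinatewise. The object on the right of \eqref{RR1n} is an $\A_{2n}^{(2)}$ Weyl--Kac numerator, but recognising it as such is the Macdonald/Kac machinery itself, not a reduction to it.

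Second, and more seriously, the engine of your argument --- an ``$\A_{2n-1}$ Bailey step'' taking the unit pair to $\beta_{\mb}=q^{\frac12\mb C\mb^t}/(q)_{\mb}$ with $C$ the $\A_{2n-1}$ Cartan matrix, and an $\alpha$ side that folds to $\prod_{i<j}(v_i^2-v_j^2)$ --- is asserted, not supplied. No such off-the-shelf $\A$-type Bailey lemma produces that quadratic form in one step; the known $\A_n$ Bailey chains (Milne--Lilly, etc.) do not yield the full Cartan coupling $\sum m_a^2-\sum m_am_{a+1}$ directly, and your ``folding'' of the $\A_{2n-1}$ Vandermonde is heuristic. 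Everything you mark as ``the crux'' is in fact the whole proof.

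The paper takes a different, and sharper, route. The Rogers--Selberg input is of type $\mathrm C$, not $\A$: it is Milne's $\mathrm C_n$ Rogers--Selberg identity (Theorem~\ref{thmMilne}), applied with $n$ replaced by $N-1$, whose left side already carries the $\mathrm C$-type Vandermonde that in the limit $\xb\to(1,\dots,1)$ produces the factor $\prod_{i<j}(v_i^2-v_j^2)$ and the half-integer lattice sum on the right of \eqref{RR1n}. The $\A_{N-1}$ Cartan matrix appears only on the fermionic side, through a separate Hall--Littlewood identity (Theorem~\ref{thmA}): $\sum_{m\geq 0}\frac{q^m}{(q)_m}Q'_{(2^m)}(1^{N-1})=\sum_{\mb\in\Nat^{N-1}}q^{\frac12\mb C\mb^t}/(q)_{\mb}$, proved by iterating Lascoux's closed form for $Q'_{\la/\mu}(1)$ and summing the resulting $q$-binomials by $q$-Chu--Vandermonde. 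Matching the two sides of Milne's identity in the principal specialisation then gives \eqref{RR1n}; the $p=1$ companion comes from the specialisation $\xb\to(q^{1/2},q^{-1/2},q^{1/2},\dots)$ rather than a shifted Bailey input. So the decomposition you are looking for is ``$\mathrm C_{N-1}$ Rogers--Selberg $+$ Hall--Littlewood specialisation'', not ``$\A_{2n-1}$ Bailey pair $+$ folding''.
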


Analogous to the above theorem, the left-hand side of \eqref{eqc1}
can be expressed without the use of indices by introducing the 
square matrix $B$ of dimension $d:=(N-1)(k-1)$ given by the Kronecker 
product of the Cartan matrix $C$ of $\A_{N-1}$ and the $(k-1)\times (k-1)$ 
matrix $T^{-1}$ with entries $(T^{-1})_{ij}=\min\{i,j\}$:
\[
B_{ai,bj}=(C\otimes T^{-1})_{ai,bj}=C_{ab}\min\{i,j\}.
\]
For example, the $k=p$ instance of \eqref{eqc1} generalises 
\eqref{RR1n} to
\begin{equation}\label{tensor}
\sum_{\mb\in\Nat^d}
\frac{q^{\frac{1}{2}\mb B \mb^t}}{(q)_{\mb}}
=\frac{1}{(q)_{\infty}^{2n^2-n}}
\sum \xi(\vb/\rhob) (-1)^{\abs{\vb}-\abs{\rhob}}
q^{\tfrac{||\vb||^2-||\rhob||^2}{2(2k+2n-1)}}.
\end{equation}
We should remark that the expression on the left-hand side of
\eqref{tensor} has a representation theoretic interpretation
due to Feigin and Stoyanovsky \cite{FS94}. Their 
interpretation in fact holds for $B=C\otimes T^{-1}$ where $C$ is
a Cartan matrix of any semi-simple simply laced Lie algebra
$\g$. Let $\hat{\g}$ be the (nontwisted) affine counterpart of $\g$
and $V_l$ the level-$l$ vacuum integrable highest weight module
of $\g$ with vacuum vector $v$. 
Then $W_l$ is the space $W_l=U(\hat{\n}_{+})\cdot v_0\subset V_l$,
with $U$ the universal enveloping algebra and 
$\n_{-}\oplus\h\oplus\n_{+}$ the Cartan decomposition of $\g$.
The Feigin--Stoyanovsky formula states that
\[
\textup{Tr}(q^{L_0})|_{W_{k-1}}=
\sum_{\mb\in\Nat^d}
\frac{q^{\frac{1}{2}\mb B \mb^t}}{(q)_{\mb}},
\]
where $d=(k-1)\textup{rank}(\g)$ and $L_0$ the energy operator.

There is another elegant expression for the 
left-hand side of \eqref{eqc1} using notation from the theory of 
Hall--Littlewood polynomials, see \cite{Macdonald95,Warnaar06}. 
Let $\la=(\la_1,\la_2,\dots)$ be a partition and $\la'$
its conjugate. Then the $q$-function 
\[
b_{\la}(q)=\prod_{i\geq 1} (q)_{\la'_i-\la'_{i+1}}
\]
features in the Cauchy identity for the Hall--Littlewood polynomials
$P_{\la}(\xb)=P_{\la}(\xb;q)$ and in the principal specialisation formula 
on an infinite alphabet:
\[
P_{\la}(1,q,q^2,\dots)=\frac{q^{n(\la)}}{b_{\la}(q)},
\]
where 
\begin{equation}\label{n}
n(\la)=\sum_{i\geq 1} (i-1)\la_i=\sum_{i\geq 1}\binom{\la_i'}{2}.
\end{equation}
If we further denote $(\la|\mu)=\sum_{i\geq 1} \la_i'\mu_i'$
then the left-hand side of \eqref{eqc1} for $p=k$ or $p=1$ corresponds to
\begin{equation}\label{HL}
\sum
q^{\frac{1}{2}\sum_{a,b=1}^{N-1}C_{ab}(\la^{(a)}|\la^{(b)})}
\prod_{a=1}^{N-1} \frac{z_a^{\abs{\la^{(a)}}}}{b_{\la^{(a)}}(q)}
\end{equation}
summed over partitions $\la^{(1)},\dots,\la^{(N-1)}$ such that
$\la^{(1)}_1,\dots,\la^{(N-1)}_1\leq k-1$, i.e., such that the largest 
parts of the $\la^{(i)}$ do not exceed $k-1$.
In \eqref{HL} $z_i=1$ for all $i$ if $p=k$ and 
$z_{2i-1}=q$ and $z_{2i}=q^{-1}$ for all $i$ if $p=1$.

The rewriting \eqref{HL} of the left-hand side of \eqref{eqc1}
shows that not only the $k=1$ case of \eqref{eqc1} is
known, but also the limiting case $k\to\infty$ (with $p=k$).
Indeed, in \cite{Hua00} Hua derived a combinatorial identity 
related to Kac's conjectures \cite{Kac83} concerning the number 
of isomorphism classes of absolutely indecomposable representations of 
quivers over finite fields. For finite and tame quivers
(i.e., classical and affine ADE) Kac's conjectures are known to be true,
ensuring that in these cases Hua's combinatorial identities are true as
well. For the finite quiver $\A_{N-1}$ ($N$ not necessarily even)
Hua's identity (corrected in \cite{Fulman01}) is
\[
\sum_{\la^{(1)},\dots,\la^{(N-1)}}
q^{\frac{1}{2}\sum_{a,b=1}^{N-1}C_{ab}(\la^{(a)}|\la^{(b)})}
\prod_{a=1}^{N-1} \frac{z_a^{\abs{\la^{(a)}}}}{b_{\la^{(a)}}(q)}
=\prod_{\alpha\in R_{+}} \frac{1}{(z^{\alpha}q)_{\infty}},
\]
where $R_{+}$ is the set of positive roots of $\A_{N-1}$:
\[
R_{+}=\{\alpha_i+\alpha_{i+1}+\cdots+\alpha_j:~1\leq i\leq j\leq N-1\},
\]
and $z^{\alpha_i+\alpha_{i+1}+\cdots+\alpha_j}=z_iz_{i+1}\cdots z_j$.

\section{Further conjectures}

At first sight it may seem surprising that the left-hand side
of \eqref{eqc1} features the root system $\A_{2n-1}$ instead
of, more simply, $\A_n$. However, this is not that unexpected
in view of the following theorem due
to Feigin and Stoyanovsky \cite{FS94} ($n=1$) and Stoyanovsky 
\cite{Stoyanovsky98} ($n>1$).

\begin{theorem}\label{thmFS}
For $k,n$ positive integers, $N=2n+1$ and $p=k$, 
\begin{multline}\label{eqS}
\sum
\frac{q^{\frac{1}{2}\sum_{a,b=1}^{N-1}\sum_{i=1}^{k-1}
C_{ab}M_i^{(a)}M_i^{(b)}+
\sum_{a=1}^{N-1}\sum_{i=p}^{k-1} (-1)^a M_i^{(a)}}}
{\prod_{a=1}^{N-1}\prod_{i=1}^{k-1}(q)_{m_i^{(a)}}} \\
=\frac{1}{(q)_{\infty}^{2n^2+n}}
\sum \chi(\vb/\rhob^{\ast}) 
q^{\tfrac{||\vb||^2-||\rho^{\ast}+k-p||^2}{4(k+n)}},
\end{multline}
where $\rhob^{\ast}=(1,2,\dots,n)$,
\begin{equation}\label{char}
\chi(\vb/\wb)=
\prod_{i=1}^n \frac{v_i}{w_i}
\prod_{1\leq i<j\leq n} \frac{v_i^2-v_j^2}{w_i^2-w_j^2}
\end{equation}
and the sum on the right is over $\vb\in\Z^n$ such that
$v_i\equiv \rho_i^{\ast}+k-p\pmod{2k+2n}$.
\end{theorem}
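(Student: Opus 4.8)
The plan is to evaluate the left-hand side of \eqref{eqS} analytically, within the Hall--Littlewood and Macdonald-identity framework behind \eqref{HL} and \eqref{eta}. The left-hand side of \eqref{eqS} has the same shape as that of \eqref{eqc1}, so it admits the rewriting \eqref{HL} (whose derivation is valid for any $N$, here $N=2n+1$); specialising to $p=k$, so that all $z_a=1$ and the linear term in the exponent of \eqref{eqS} disappears, it becomes the bounded Hall--Littlewood sum
\[
\sum q^{\frac12\sum_{a,b=1}^{N-1}C_{ab}(\la^{(a)}|\la^{(b)})}\prod_{a=1}^{N-1}\frac{1}{b_{\la^{(a)}}(q)},
\]
taken over partitions $\la^{(1)},\dots,\la^{(N-1)}$ whose largest parts are at most $k-1$. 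Removing the bound on the parts, that is, by letting $k\to\infty$, turns this into Hua's $\A_{2n}$ identity at $z=1$, whose right-hand side is exactly $1/(q)_\infty^{2n^2+n}$ because $\A_{2n}$ has $\binom{2n+1}{2}=2n^2+n$ positive roots. Thus \eqref{eqS} is to be read as the \emph{finitisation} of Hua's identity in which restricting the parts to lie below $k$ replaces the bare product $1/(q)_\infty^{2n^2+n}$ by that product times a rank-$n$ theta sum; the whole task is to produce this theta sum and to show that it carries the modulus $2k+2n$ and the weight $\chi$ of \eqref{char}.

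The engine for the finitisation is a Rogers--Ramanujan-type summation for Hall--Littlewood polynomials. Using $P_\la(1,q,q^2,\dots)=q^{n(\la)}/b_\la(q)$ together with \eqref{n} one recognises each factor $1/b_{\la^{(a)}}(q)$ as a principal specialisation, which makes the product over the $\A_{2n}$ Dynkin nodes $a=1,\dots,N-1$ accessible to the Hall--Littlewood Cauchy and Littlewood identities. Applying these along the chain whose couplings are the Cartan integers $C_{ab}$ should collapse the sum into a single theta-type sum, with the part-bound $\la^{(a)}_1\le k-1$ surviving as a level restriction; evaluating it calls for an affine Macdonald-type denominator formula at level $k-1$, in the spirit of \eqref{eta}. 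The order-$2$ diagram automorphism $a\mapsto N-a$ of $\A_{2n}$ then folds the rank-$2n$ data to $C_n$-type rank-$n$ data, and it is this folding that manufactures the hyperoctahedral Weyl denominator $\chi$ of \eqref{char}, the integral $C_n$ Weyl vector $\rhob^{\ast}=(1,\dots,n)$, the summation lattice $\vb\in\Z^n$, and the modulus $2k+2n$, whose half $k+n=(k-1)+(n+1)$ displays the level $k-1$ shifted by the dual Coxeter number $n+1$ of $C_n$.

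Several checks should be imposed en route. As $k\to\infty$ the theta sum must collapse to its $\vb=\rhob^{\ast}$ term, so that \eqref{eqS} degenerates to Hua's $\A_{2n}$ identity at $z=1$; the case $n=1$ (where $N=3$, $\chi(\vb/\rhob^{\ast})=v_1$, and $v_1\equiv1\pmod{2k+2}$) must reproduce the original Feigin--Stoyanovsky $\widehat{\mathfrak{sl}}_3$ identity; and at $k=2$ the outcome should be compatible with the mechanism behind Theorem~\ref{thm}. The main obstacle is precisely the Hall--Littlewood summation of the previous paragraph: one must prove that the bounded sum evaluates to a theta function with exactly the modulus $2k+2n$ and the denominator $\chi$ of \eqref{char}, and with prefactor exactly $1/(q)_\infty^{2n^2+n}=1/(q)_\infty^{\abs{R_{+}}}$, rather than to some neighbouring affine evaluation. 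Conceptually this is underwritten by the representation-theoretic reading of the left-hand side as the principal character $\textup{Tr}(q^{L_0})|_{W_{k-1}}$ of the level-$(k-1)$ vacuum module of $\widehat{\mathfrak{sl}}_{2n+1}$: the bosonic form of that character is a Weyl--Kac numerator, a folded rank-$n$ theta, divided by $(q)_\infty^{\dim\n_{+}}$. Converting that conceptual guarantee into the explicit $q$-series evaluation above, with every constant pinned down, is where the real work lies.
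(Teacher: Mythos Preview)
The paper does not give its own proof of Theorem~\ref{thmFS}: it is quoted as a known result due to Feigin and Stoyanovsky \cite{FS94} (for $n=1$) and Stoyanovsky \cite{Stoyanovsky98} (for $n>1$), and serves as background motivating Conjectures~\ref{conj1} and \ref{conp1}. The only case the paper treats independently is $k=2$, which falls out of Section~4 as a by-product of the proof of Theorems~\ref{thm} and \ref{thmp1k2}: one combines Theorem~\ref{thmA} with Milne's $\textup{C}_n$ Rogers--Selberg identity (Theorem~\ref{thmMilne}) and specialises $\xb\to(1^n)$. There is thus no general-$k$ argument in the paper to compare your proposal against.

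As for the proposal itself, it is a programme rather than a proof. The decisive step---showing that the bounded Hall--Littlewood sum \eqref{HL} for $\A_{2n}$ with part-bound $k-1$ evaluates to $(q)_{\infty}^{-(2n^2+n)}$ times a rank-$n$ theta sum with modulus $2k+2n$ and weight $\chi$---is never carried out; you acknowledge as much when you write that converting the ``conceptual guarantee into the explicit $q$-series evaluation above, with every constant pinned down, is where the real work lies.'' The folding heuristic along the order-$2$ diagram automorphism $a\mapsto N-a$ and the appeal to an ``affine Macdonald-type denominator formula at level $k-1$'' are reasonable signposts, but no summation identity is produced or cited that actually performs this evaluation, and ``applying the Cauchy and Littlewood identities along the chain'' is too vague to be checked. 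Finally, your representation-theoretic underwriting---identifying the left side with $\textup{Tr}(q^{L_0})|_{W_{k-1}}$ and then invoking the bosonic (Weyl--Kac) form of that character---is precisely the route of \cite{FS94,Stoyanovsky98}; since Theorem~\ref{thmFS} \emph{is} that character identity, this does not supply an independent argument so much as name the original one.
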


The Feigin--Stoyanovsky theorem generalises Macdonald's C$_n^{(1)}$
$\eta$-function identity \cite[p. 136, (6)]{Macdonald72}:
\[
\eta(\tau)^{2n^2+n}=
\sum \chi(\vb/\rhob^{\ast}) q^{||\vb||^2/(4(n+1))},
\]
which, for $n=1$, is equivalent to Jacobi's well-known
\[
(q)_{\infty}^3=\sum_{m=0}^{\infty} (-1)^m (2m+1)q^{\binom{m+1}{2}}.
\]

\begin{conjecture}\label{conp1}
Equation \eqref{eqS} is true for $p=1$.
\end{conjecture}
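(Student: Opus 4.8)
The plan is to derive Conjecture~\ref{conp1} from the $p=k$ instance of \eqref{eqS}, which is the Feigin--Stoyanovsky theorem (Theorem~\ref{thmFS}), by an iteration that switches on the linear term $\sum_{a}\sum_{i=1}^{k-1}(-1)^aM_i^{(a)}$ one layer at a time. It is cleanest to work with the Hall--Littlewood reformulation \eqref{HL} of the left-hand side: with $M_i^{(a)}$ the parts of the conjugate partition of $\la^{(a)}$ and $b_{\la^{(a)}}(q)=\prod_i(q)_{m_i^{(a)}}$, passing from $p=k$ to $p=1$ amounts to turning on the specialisation $z_{2i-1}=q$, $z_{2i}=q^{-1}$. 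I would realise \eqref{HL} with these $z_a$ as the output of $k-1$ iterations of a $C_n^{(1)}$ Bailey-type transformation of Milne--Lilly type, the seed (the $k=1$ case) being Macdonald's $C_n^{(1)}$ $\eta$-function identity quoted just after Theorem~\ref{thmFS}. In the $p=k$ chain every iteration uses the unshifted base and one simply recovers Theorem~\ref{thmFS}; in the $p=1$ chain each of the $k-1$ iterations uses a once-shifted base --- a higher-rank analogue of the Agarwal--Andrews--Bressoud Bailey-lattice move --- so that the linear term builds up on the left while the residue shift $k-p=k-1$ builds up on the right.

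The engine of each iteration would be the $q$-Pieri rule for Hall--Littlewood polynomials: adjoining one column to a partition (raising the bound on its largest part by one) is the combinatorial content of one Bailey step, and the Pieri coefficients supply the factors $1/(q)_{m_i^{(a)}}$. The key thing to verify is that the shifted chain produces, in the limit, exactly the theta sum of \eqref{eqS} with $p=1$: the residue class $v_i\equiv\rho_i^{\ast}\pmod{2k+2n}$ must become $v_i\equiv\rho_i^{\ast}+k-1$, the exponent $||\vb||^2-||\rhob^{\ast}||^2$ must become $||\vb||^2-||\rhob^{\ast}+k-1||^2$, and $\chi(\vb/\rhob^{\ast})$ must be unchanged --- it is antisymmetric for the Weyl group of type $C_n$, so only the net effect of the shift (a composition of a Weyl reflection with a lattice translation) is relevant, and this is also why \eqref{eqS}, unlike \eqref{eqc1}, carries no extra sign. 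One must also check that $(q)_{\infty}^{2n^2+n}$ is stable in the limit where the box sizes tend to infinity; since the specialisation $z_{2i}=q^{-1}$ makes individual summands blow up, the iteration should be kept finitised and the limit taken only at the very end.

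The step I expect to be the real obstacle is formulating and proving the precise $C_n$ Bailey-lattice move. A $C_n$ Bailey transform and lemma are available (Milne--Lilly), but in a basic-hypergeometric setting that must first be specialised to the present $\eta$-function regime, and a base-changing (``lattice'') version --- the higher-rank counterpart of changing the Bailey base from $1$ to $q$ --- then has to be developed and matched against the theta-function bookkeeping above; this matching, rather than any individual summation, is the combinatorial heart of the matter. A possible alternative, modelled on Schur's $q\mapsto q^{-1}$ symmetry between the two Rogers--Ramanujan functions, is to first produce a polynomial, box-bounded refinement of Theorem~\ref{thmFS}, apply $q\mapsto q^{-1}$ together with complementation of the capped partitions, and then let the box grow; here the difficulty moves to producing that refinement, since for $n>1$ a polynomial $C_n^{(1)}$ companion of Stoyanovsky's theorem of exactly the shape compatible with box-complementation is not in the literature and would itself have to be built from the $q$-binomial recursion underlying Theorem~\ref{thmFS}.
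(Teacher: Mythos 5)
First, a point of orientation: the statement you are addressing is stated in the paper as a \emph{conjecture}. The authors prove only the $k=2$ case (Theorem~\ref{thmp1k2}), and they do so by combining Theorem~\ref{thmA} with Milne's $C_n$ Rogers--Selberg identity (Theorem~\ref{thmMilne}) and taking the limit $\xb\to(q^{1/2},q^{-1/2},q^{1/2},\dots)$; no proof for general $k$ is offered or, apparently, known. Your proposal does not close this gap. Its engine --- a $C_n^{(1)}$ Bailey-lattice move, i.e.\ a base-shifting higher-rank Bailey step whose $(k-1)$-fold iteration would produce the left-hand side of \eqref{eqS} with the linear term switched on --- is neither formulated nor proven, and you concede as much. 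Moreover, even the ``easy half'' of your plan presupposes machinery that is not available: Theorem~\ref{thmFS} itself (the $p=k$ case) is due to Feigin--Stoyanovsky and Stoyanovsky by representation-theoretic methods, not by iterating a $C_n$ Bailey chain seeded with Macdonald's $C_n^{(1)}$ $\eta$-function identity, and no such Bailey-chain derivation exists for general $n$ and $k$. So both the unshifted chain and the shifted one are hypothetical, and nothing is actually established.

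Two further points need care even at the level of the plan. The claim that $\chi(\vb/\rhob^{\ast})$ is ``unchanged'' under the shift $\rhob^{\ast}\mapsto\rhob^{\ast}+k-1$ because of $C_n$ Weyl antisymmetry is not automatic: the shift is a lattice translation, not a Weyl group element, it changes the residue classes modulo $2k+2n$, and matching the two theta sums is precisely the bookkeeping that a proof would have to carry out rather than wave through. On the positive side, your observation that $p=1$ corresponds to the specialisation $z_{2i-1}=q$, $z_{2i}=q^{-1}$ in \eqref{HL} is correct and is exactly the mechanism behind the paper's $k=2$ proof (the $\xb\to(q^{1/2},q^{-1/2},\dots)$ limit in Theorem~\ref{thmMilne} versus the $\xb\to(1^n)$ limit for $p=k$); but identifying the right specialisation is not the same as proving the identity. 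As written, your text is a research programme with its central lemma missing, and the conjecture remains open.
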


\begin{theorem}\label{thmp1k2}
Conjecture~\ref{conp1} is true for $k=2$, i.e.,
for $n$ a positive integer and $N=2n+1$,
\begin{equation*}
\sum_{\mb\in\Nat^{N-1}}
\frac{q^{\frac{1}{2}\mb C \mb^t+\abs{\mb}_{-}}}{(q)_{\mb}}
=\frac{1}{(q)_{\infty}^{2n^2+n}}
\sum \chi(\vb/\rhob^{\ast}) 
q^{\tfrac{||\vb||^2-||\rhob^{\ast}+1||^2}{4(n+2)}},
\end{equation*}
where the sum on the right is over $\vb\in \Z^n$ such that
$v_i\equiv \rho_i^{\ast}+1\pmod{2n+4}$.
\end{theorem}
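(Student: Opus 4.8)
The plan parallels the proof of Theorem~\ref{thm}. Specialising \eqref{eqS} to $k=2$ and $p=1$, the left-hand side becomes
\[
\sum_{\mb\in\Nat^{N-1}}\frac{q^{\frac12\mb C\mb^t+\abs{\mb}_{-}}}{(q)_{\mb}},
\]
which, apart from the parity of $N$ (here $N=2n+1$ rather than $N=2n$), is precisely the twisted Rogers--Ramanujan multisum already treated in Theorem~\ref{thm}: a Rogers--Ramanujan-type series attached to the Cartan matrix of $\A_{2n}$, perturbed by the linear term $\abs{\mb}_{-}$. The first step is to evaluate it in closed form. I would do so by $q$-series techniques---an iterated Bailey-type transformation along the $\A_{2n}$ Dynkin diagram, or repeated application of the $q$-binomial theorem---noting that, since $k=2$, only a single application of the $\A_{2n}$ layer is needed and no further iteration occurs; the $2n$-fold sum should collapse to an $n$-fold theta-type sum divided by a power of $(q)_{\infty}$, with the perturbation $\abs{\mb}_{-}$ surviving as a shift of the summation parameters.

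In parallel I would rewrite the right-hand side of \eqref{eqS}. Using Jacobi's triple product identity together with Macdonald's $\textup{C}_n^{(1)}$ $\eta$-function identity (the one displayed immediately after Theorem~\ref{thmFS}), the alternating sum $\sum\chi(\vb/\rhob^{\ast})\,q^{(||\vb||^2-||\rhob^{\ast}+1||^2)/(4(n+2))}$ over $\vb\in\Z^n$ with $v_i\equiv\rho_i^{\ast}+1\pmod{2n+4}$ should fold onto a product of theta functions, so that $(q)_{\infty}^{-(2n^2+n)}$ times this sum is recognised as a level-one character of $\textup{C}_n^{(1)}$. Comparing the two closed forms then finishes the proof. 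A plausibly shorter alternative is to deduce the theorem from its $p=k$ companion, namely the (known) $k=2$ case of Theorem~\ref{thmFS}: one inserts the factor $q^{\abs{\mb}_{-}}$ on the left by a single Bailey-lattice step or by an explicit change of summation variables, and checks that the matching operation on the right replaces the congruence $v_i\equiv\rho_i^{\ast}$ and the term $||\rhob^{\ast}||^2$ by $v_i\equiv\rho_i^{\ast}+1$ and $||\rhob^{\ast}+1||^2$.

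I expect the genuinely delicate step to be not the fermionic manipulation but the ``bosonic'' bookkeeping: establishing the theta-function identity that matches the output of the first step---an $\widehat{\A}_{2n}$-type lattice sum, suitably specialised---with the $\textup{C}_n$ Weyl-orbit sum $\sum\chi(\vb/\rhob^{\ast})q^{\cdots}$ carrying exactly the shift $\rhob^{\ast}+1$, the modulus $2(n+2)$ and the correct signs. This is where the parity $N=2n+1$ makes itself felt, selecting the $\textup{C}_n$-type weight $\chi$ (with its extra factor $\prod_i v_i/w_i$) in place of the $\textup{D}_n$-type weight $\xi$ of Theorem~\ref{thm}; reconciling the $\rho$-shifts and normalisations under the underlying Weyl-group folding is the crux. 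The case $n=1$, where the identity reduces to a low-rank statement, offers a convenient consistency check.
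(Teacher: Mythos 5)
Your starting point is right (set $k=2$, $p=1$ in \eqref{eqS}), but both halves of your plan are missing the ideas that actually make the proof work, and one of them would fail outright. On the fermionic side you assert that the $2n$-fold sum over $\Nat^{N-1}$ ``should collapse'' to an $n$-fold theta-type sum via an iterated Bailey transformation or the $q$-binomial theorem, but you give no mechanism; that collapse \emph{is} the theorem, so asserting it is circular. The paper's route is quite different: Theorem~\ref{thmA}, equation \eqref{mod2}, first rewrites $\sum_{\mb}q^{\frac12\mb C\mb^t+\abs{\mb}_{-}}/(q)_{\mb}$ as the single sum $\sum_{m\ge 0}\frac{q^{2m}}{(q)_m}Q'_{(2^m)}(1,q^{-1},1,q^{-1},\dots)$ over modified Hall--Littlewood polynomials, and this rewriting is itself a nontrivial computation resting on Lascoux's evaluation of $Q'_{\la/\mu}(1)$ and the $q$-Chu--Vandermonde summation. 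Nothing in your sketch plays this role.

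On the bosonic side your proposal is not merely incomplete but unsound for $n>1$: you want to fold $\sum\chi(\vb/\rhob^{\ast})q^{(||\vb||^2-||\rhob^{\ast}+1||^2)/(4(n+2))}$ into a product of theta functions using Jacobi's triple product and Macdonald's $\textup{C}_n^{(1)}$ $\eta$-function identity, but that identity governs the modulus $2n+2$ (the $k=1$ case), whereas here the modulus is $2n+4$; the right-hand side of \eqref{eqS} with $k=2$ is a specialised higher-level character that does not factor as an $\eta$-product in general (it does only for $n=1$, which is why your consistency check looks plausible). The paper never factorises it. Instead the theta-type sum is produced directly from the left-hand side of Milne's $\textup{C}_{N-1}$ Rogers--Selberg identity (Theorem~\ref{thmMilne}) in the limit $\xb\to(q^{1/2},q^{-1/2},q^{1/2},\dots)$, where an orbit-by-orbit analysis under the Weyl group $W_{N-1}$ shows that the surviving orbits are indexed by partitions of length at most $n$ and reassemble into exactly $\sum\chi(\vb/\rhob^{\ast})q^{\cdots}$ with the shift $\rhob^{\ast}+1$; the infinite products $\prod_i(qx_i^2)_{\infty}\prod_{i<j}(qx_ix_j)_{\infty}$ on Milne's right-hand side supply the factor $(q)_{\infty}^{N(N-1)/2}$. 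This identity is the bridge your sketch lacks, and neither of your two suggested alternatives (direct $q$-series manipulation, or a ``Bailey-lattice step'' from the $p=k$ case) supplies a substitute for it.
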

Of course, by the Feigin--Stoyanovsky theorem we also have
\begin{equation*}
\sum_{\mb\in\Nat^{N-1}}
\frac{q^{\frac{1}{2}\mb C \mb^t}}{(q)_{\mb}}
=\frac{1}{(q)_{\infty}^{2n^2+n}}
\sum \chi(\vb/\rhob^{\ast}) 
q^{\tfrac{||\vb||^2-||\rhob^{\ast}||^2}{4(n+2)}},
\end{equation*}
where the sum on the right is over $\vb\in \Z^n$ such that
$v_i\equiv \rho_i^{\ast}\pmod{2n+4}$.

\medskip

There is a well-known even-modulus counterpart of the Andrews--Gordon
identities \eqref{AG} due to Bressoud \cite{Bressoud79}:
\begin{equation}\label{Bressoud}
\sum_{m_1,\dots,m_{k-1}} 
\frac{q^{M_1^2+\cdots+M_{k-1}^2+M_p+\cdots+M_{k-1}}}
{(q)_{m_1}\cdots (q)_{m_{k-2}}(q^2;q^2)_{m_{k-1}}}
=\frac{(q^p,q^{2k-p},q^{2k};q^{2k})_{\infty}}{(q)_{\infty}},
\end{equation}
where $k>1$ and, again, $1\leq p\leq k$ and $M_i=m_i+\cdots+m_{k-1}$.
It will be convenient to interpret $1/(q^2;q^2)_{m_0}$ as
$(q)_{\infty}/(q^2;q^2)_{\infty}$ so that \eqref{Bressoud} is true 
for all positive integers $k$.

Our next conjecture unifies Bressoud's identity for $p=k$ with Macdonald's
$\eta$-function identity for $\A_{2n-1}^{(2)}$
\cite[p. 136, (6)(b)]{Macdonald72}:
\begin{equation}\label{A22odd}
\frac{\eta(\tau)^{2n^2+n-1}}{\eta(2\tau)^{2n-1}}
=\sum
\xi(\vb/\rhob^{\star}) (-1)^{(\abs{\vb}-\abs{\rhob^{\star}})/(2n)} 
q^{||\vb||^2/(4n)},
\end{equation}
where $\rhob^{\star}=(0,1,\dots,n-1)$ and $\vb$ is summed over
$\Z^n$ such that $v_i\equiv \rho^{\star}_i\pmod{2n}$,
and a second $\eta$-function identity for $\A_{2n}^{(2)}$
\cite[p. 138, (6)(a)]{Macdonald72}:
\begin{equation}\label{A22even}
\frac{\eta(\tau)^{2n^2+3n}}{\eta(2\tau)^{2n}}
=\sum
\chi(\vb/\rhob) q^{||\vb||^2/(2(2n+1))},
\end{equation}
where the sum is over $\vb\in\Z^n$ such that $v_i\equiv
\rho_i\pmod{2n+1}$.

\begin{conjecture}\label{ConB}
For $k,N$ positive integers and $n=\lfloor N/2 \rfloor$, 
\begin{subequations}\label{conjB}
\begin{align} \notag 
\sum & 
\frac{q^{\frac{1}{2}\sum_{a,b=1}^{N-1}\sum_{i=1}^{k-1}
C_{ab}M_i^{(a)}M_i^{(b)}}}
{\prod_{a=1}^{N-1}\bigl(\prod_{i=1}^{k-2}(q)_{m_i^{(a)}}\bigr)
(q^2;q^2)_{m_{k-1}^{(a)}}} \\
&=\frac{1}{(q)_{\infty}^{N(N-1)/2}}
\sum \xi(\vb/\rhob^{\star}) 
(-1)^{\tfrac{\abs{\vb}-\abs{\rhob^{\star}}}{2k+N-2}}
q^{\tfrac{||\vb||^2-||\rhob^{\star}||^2}{2(2k+N-2)}}
\label{a} \\
&=\frac{1}{(q)_{\infty}^{N(N-1)/2}}
\sum
\chi(\vb/\rhob) q^{\tfrac{||\vb||^2-||\rhob||^2}{2(2k+N-2)}},
\label{b}
\end{align}
\end{subequations}
where \eqref{a} applies for even $N$, in which case the sum is over
$\vb\in\Z^n$ such that $v_i\equiv \rho_i^{\star}\pmod{2k+N-2}$
and \eqref{b} applies for odd $N$, in which case the sum is over
$\vb\in\Z^n$ such that $v_i\equiv \rho_i\pmod{2k+N-2}$.
\end{conjecture}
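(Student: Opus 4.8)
\textbf{Proof proposal for Conjecture~\ref{ConB}.} The plan is to prove \eqref{conjB} in stages that mirror how Theorems~\ref{thm}, \ref{thmFS} and~\ref{thmp1k2} settle the odd-modulus conjectures. Two extreme cases are already available: $k=1$ is the pair of Macdonald $\eta$-function identities \eqref{A22odd} and \eqref{A22even}, while $N\in\{2,3\}$ (that is, $n=1$) is Bressoud's identity \eqref{Bressoud} with $p=k$. So the first genuine target is $k=2$ for general $N$, where the left-hand side of \eqref{conjB} collapses to
\[
\sum_{\mb\in\Nat^{N-1}}
\frac{q^{\frac{1}{2}\mb C \mb^t}}{(q)_{m_1}\cdots(q)_{m_{N-2}}\,(q^2;q^2)_{m_{N-1}}},
\]
which is exactly the left-hand side appearing in Theorems~\ref{thm} and~\ref{thmp1k2} with the single factor $(q)_{m_{N-1}}$ replaced by $(q^2;q^2)_{m_{N-1}}=(q)_{m_{N-1}}(-q;q)_{m_{N-1}}$. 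The organising observation is a shift of modulus: the right-hand side of \eqref{conjB} has modulus $2k+N-2$, one less than the modulus $2k+N-1$ governing Conjectures~\ref{conj1} and~\ref{conp1}; together with the replacement of $(q)_{m_{N-1}}$ by $(q^2;q^2)_{m_{N-1}}$ this is precisely the signature of passing from a Bailey chain to a Bailey lattice, and, on the product side, of the transition from the Macdonald $\A_{2n}^{(2)}$ and $C_n^{(1)}$ $\eta$-identities to their siblings \eqref{A22odd} and \eqref{A22even} carrying the extra $\eta(2\tau)$ factors.

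Accordingly, the approach is: first, recast the evaluations behind Theorems~\ref{thm}, \ref{thmFS} and their $p$-deformations in the language of $\A_{N-1}$ Bailey pairs, exhibiting a seed whose iterated Bailey-chain image reproduces the left-hand sides of \eqref{eqc1} and \eqref{eqS}. Next, establish the $\A_{N-1}$ analogue of the Agarwal--Andrews--Bressoud Bailey-lattice move, designed so that one application of it simultaneously lowers the modulus by one and inserts the factor $(q^2;q^2)_{m_{N-1}}$ in place of $(q)_{m_{N-1}}$. Then the left-hand side of \eqref{conjB} is obtained by running the ordinary chain $k-2$ times and the lattice move once, and its right-hand side follows by matching the resulting theta sum with Macdonald's $\A_{2n-1}^{(2)}$ identity \eqref{A22odd} when $N$ is even and with the $\A_{2n}^{(2)}$ identity \eqref{A22even} when $N$ is odd; this matching is also what produces $\xi(\vb/\rhob^{\star})$ in \eqref{a} versus $\chi(\vb/\rhob)$ in \eqref{b}. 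For $k=2$ the ordinary chain is vacuous, so everything reduces to a single lattice step applied to the identities already established in Theorems~\ref{thm} and~\ref{thmp1k2}, and should go through unconditionally.

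The hard part will be constructing the higher-rank Bailey lattice: while the multivariable Bailey chain needed for the chain steps is presumably already implicit in the proofs of Theorems~\ref{thm} and~\ref{thmFS}, I am not aware of an $\A_{N-1}$ Bailey \emph{lattice} in the literature, and producing one whose single application has exactly the two effects above (modulus $\mapsto$ modulus $-1$, and $(q)_{m_{N-1}}\mapsto(q^2;q^2)_{m_{N-1}}$) is the crux. A secondary technical point is to check that the two parities of $N$ can be treated uniformly up to the choice of Macdonald identity, so that the $\xi/\chi$ dichotomy and the shifts $\rhob^{\star}=(0,1,\dots,n-1)$ versus $\rhob=(1/2,3/2,\dots,n-1/2)$ come out correctly. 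For $k=2$ there is also a more pedestrian alternative that avoids Bailey theory: expand $1/(-q;q)_{m_{N-1}}$ by the $q$-binomial theorem, interchange the order of summation so that the inner sum over $\mb$ is the already-proven identity \eqref{RR1n} (respectively the identity of Theorem~\ref{thmp1k2}) with shifted arguments, and then collapse the remaining one-dimensional sum by Jacobi's triple product; the only real work there is the bookkeeping of the resulting exponents and signs.
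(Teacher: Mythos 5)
You should first be aware that the paper contains no proof of this statement: Conjecture~\ref{ConB} is left open by the authors, supported only by its boundary cases (the $k=1$ case is the pair of Macdonald identities \eqref{A22odd} and \eqref{A22even}, and the $N=2$ case is Bressoud's identity \eqref{Bressoud} with $p=k$) and by the dilogarithm asymptotics of Section~3. So there is no argument in the paper to compare yours against, and a complete proof would be a new result rather than a reconstruction.

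As it stands, your text is a research programme, not a proof, and the decisive gap is one you name yourself: the $\A_{N-1}$ Bailey lattice whose single application is to lower the modulus by one and replace $(q)_{m_{k-1}^{(a)}}$ by $(q^2;q^2)_{m_{k-1}^{(a)}}$ is never constructed, and neither is the $\A_{N-1}$ Bailey chain that is supposed to generate the left-hand sides of \eqref{eqc1} and \eqref{eqS}. The paper's proofs of Theorems~\ref{thm} and \ref{thmp1k2} do not use Bailey machinery at all --- they combine Milne's $C_n$ Rogers--Selberg identity with Hall--Littlewood specialisations --- so even your preliminary ``recasting'' step is itself an unproved assertion. The claim that the $k=2$ case ``should go through unconditionally'' is likewise unsupported; note that the authors, who do prove the $k=2$ cases of Conjectures~\ref{conj1} and \ref{conp1}, leave the $k=2$ case of Conjecture~\ref{ConB} open, which strongly suggests it is not a routine consequence of Theorems~\ref{thm} and \ref{thmp1k2}. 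Your ``pedestrian alternative'' has the same difficulty: after expanding $1/(-q;q)_{m_{N-1}}$, the inner sum is a \emph{shifted} variant of \eqref{RR1n} that is not among the cases proved in the paper (only $p\in\{1,k\}$ are available), and you give no verification that the remaining outer sum recombines the modulus-$(N+3)$ theta function of \eqref{RR1n} into the modulus-$(N+2)$ theta function demanded by \eqref{conjB}. Finally, a small but genuine error: your claim that $N=3$ reduces to Bressoud's identity is false --- for $N=3$ the left-hand side of \eqref{conjB} is a two-colour ($\A_2$) sum, whereas \eqref{Bressoud} is rank one; only $N=2$ specialises to Bressoud.
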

As before, to recover \eqref{A22odd} and \eqref{A22even} as the $k=1$ case 
of \eqref{conjB} we have to interpret $1/(q^2;q^2)_{m_0^{(a)}}$ as 
$(q)_{\infty}/(q^2;q^2)_{\infty}$.

\section{Dilogarithm identities}

To provide further support for Conjecture~\ref{conj1}, we show below 
that a standard asymptotic analysis applied to \eqref{eqc1} implies an
identity for the Rogers dilogarithm due to Kirillov.

We begin by recalling the definition of the Rogers dilogarithm function 
\[
\RL(x)=-\frac{1}{2}\int_0^x\biggl(\frac{\log(1-t)}{t}+\frac{\log t}{1-t}
\biggr)\dup t, \qquad x\in[0,1].
\]
Note in particular that $\RL(1)=\pi^2/6$.

In \cite{Kirillov87} Kirillov proved the following $\A_{N-1}$ type
dilogarithm identity
\begin{equation}\label{Kir}
\frac{1}{\RL(1)}
\sum_{a=1}^{N-1}\sum_{i=1}^{K-1} \RL\Biggl(
\frac{\sin\bigl(\frac{a\pi}{K+N-1}\bigr)
\sin\bigl(\frac{(N-a)\pi}{K+N-1}\bigr)}
{\sin\bigl(\frac{(i+a)\pi}{K+N-1}\bigr)
\sin\bigl(\frac{(i+N-a)\pi}{K+N-1}\bigr)}
\Biggr)
=\frac{(N^2-1)(K-1)}{K+N-1}.
\end{equation}
If we denote the summand on the left by $S(K,N;a,i)$ then
\[
S(2k,N;a,i)=S(2k,N;a,2k-i-1)\quad\text{for $1\leq i\leq k-1$}
\]
and $S(2k,N;a,2k-1)=\RL(1)$.
Hence
\begin{equation}\label{dil}
\frac{1}{\RL(1)}
\sum_{a=1}^{N-1}\sum_{i=1}^{k-1} \RL\Biggl(
\frac{\sin\bigl(\frac{a\pi}{2k+N-1}\bigr)
\sin\bigl(\frac{(N-a)\pi}{2k+N-1}\bigr)}
{\sin\bigl(\frac{(i+a)\pi}{2k+N-1}\bigr)
\sin\bigl(\frac{(i+N-a)\pi}{2k+N-1}\bigr)}
\Biggr)
=\frac{N(N-1)(k-1)}{2k+N-1}.
\end{equation}

We now recall the following result from \cite{Kirillov95}.
\begin{lemma}
Let $B$ be a $d\times d$ symmetric, positive definite, rational matrix
and let 
\[
\sum_{i=0}^{\infty} a_i q^i=
\sum_{\mb\in\Nat^d}
\frac{q^{\frac{1}{2}\mb B \mb^t}}{(q)_{\mb}}.
\]
Then
\[
\lim_{m\to\infty} \frac{\log^2 a_m}{4m}=\sum_{i=1}^d \RL(x_i),
\]
where the $x_i$ for $1\leq i\leq d$ are the solutions of
\[
x_i=\prod_{j=1}^d(1-x_j)^{B_{ij}}
\]
such that $x_i\in(0,1)$ for all $i$.
\end{lemma}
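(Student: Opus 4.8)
The plan is to extract the limit from the behaviour of the generating function $f(q):=\sum_{\mb\in\Nat^{d}}q^{\frac12\mb B\mb^{t}}/(q)_{\mb}$ as $q\to1^{-}$. Put $q=\eup^{-\epsilon}$, $\epsilon\to0^{+}$, and define
\[
\Phi(\boldsymbol{y})=-\tfrac12\sum_{a,b=1}^{d}B_{ab}\log y_{a}\log y_{b}
+\sum_{a=1}^{d}\bigl(\RL(1)-\operatorname{Li}_2(y_{a})\bigr),
\qquad\boldsymbol{y}\in(0,1)^{d}.
\]
The three things to establish are: (i) $\log f(\eup^{-\epsilon})=A/\epsilon+O(\log\epsilon^{-1})$ with $A:=\sup_{\boldsymbol{y}\in(0,1)^{d}}\Phi(\boldsymbol{y})$; (ii) $A=\sum_{i=1}^{d}\RL(x_{i})$ for the $x_{i}$ of the statement; (iii) (i) and (ii) imply $\log^{2}a_{m}/(4m)\to A$. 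Step (i) is a two-sided Laplace (saddle point) analysis of the multiple sum, step (ii) identifies the saddle with the solution of $x_{i}=\prod_{j}(1-x_{j})^{B_{ij}}$, and step (iii) is a Tauberian-type transfer.

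The key input for (i) is the uniform estimate $\log\frac{1}{(q)_m}=\frac{1}{\epsilon}\bigl(\RL(1)-\operatorname{Li}_2(q^m)\bigr)+O(\log\epsilon^{-1})$ for $0\le m\le R/\epsilon$ and any fixed $R$, which comes from Euler--Maclaurin (a monotone integral comparison) applied to $-\sum_{j=1}^{m}\log(1-\eup^{-j\epsilon})$, using $\int_{0}^{c}\log(1-\eup^{-t})\,\dup t=\operatorname{Li}_2(\eup^{-c})-\RL(1)$. Writing $y_{a}:=q^{m_{a}}$ (so $-\log y_{a}=m_{a}\epsilon$), the logarithm of the $\mb$-th summand of $f(q)$ is $\epsilon^{-1}\Phi(\boldsymbol{y})+O(\log\epsilon^{-1})$ uniformly for $m_{a}\le R/\epsilon$. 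The lower bound $\log f(\eup^{-\epsilon})\ge A/\epsilon+O(\log\epsilon^{-1})$ follows by retaining the single summand whose $\boldsymbol{y}$ is closest to a maximiser of $\Phi$; for the upper bound, with $\lambda=\lambda_{\min}(B)>0$ one has $\tfrac12\mb B\mb^{t}\ge\tfrac12\lambda\|\mb\|^{2}$, so together with $1/(q)_{\mb}\le(q)_{\infty}^{-d}=\eup^{d\RL(1)/\epsilon+O(\log\epsilon^{-1})}$ the part of the sum with $\|\mb\|>R/\epsilon$ is at most $\eup^{(d\RL(1)-\frac12\lambda R^{2})/\epsilon+O(\log\epsilon^{-1})}\le\eup^{(A-\delta)/\epsilon}$ for $R$ large (since $0<A<d\RL(1)$), while the region $\|\mb\|\le R/\epsilon$ contains only $O(\epsilon^{-d})$ lattice points, each contributing at most $\eup^{A/\epsilon+O(\log\epsilon^{-1})}$. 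For (ii), pass to $\ub=(\log y_{1},\dots,\log y_{d})\in(-\infty,0)^{d}$: the quadratic part of $\Phi$ is strictly concave because $B$ is positive definite, and each $u\mapsto-\operatorname{Li}_2(\eup^{u})$ is concave because $\tfrac{\dup^{2}}{\dup u^{2}}\operatorname{Li}_2(\eup^{u})=\eup^{u}/(1-\eup^{u})>0$ for $u<0$; with coercivity this gives a unique interior critical point, which one checks (using $\partial_{y_{a}}\Phi\to-\infty$ as $y_{a}\to1^{-}$, $\Phi\to-\infty$ as $y_{a}\to0^{+}$) is the global maximiser over $(0,1)^{d}$. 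Its equations $\sum_{b}B_{ab}\log y_{b}=\log(1-y_{a})$ become, under $x_{a}=1-y_{a}$, exactly $x_{a}=\prod_{b}(1-x_{b})^{B_{ab}}$, so the maximiser is the point $(x_{1},\dots,x_{d})$ of the statement (this also proves such a point exists and is unique). There $\sum_{a,b}B_{ab}\log y_{a}\log y_{b}=\sum_{a}\log y_{a}\log(1-y_{a})$, so by $\RL(y)=\operatorname{Li}_2(y)+\tfrac12\log y\log(1-y)$ and the reflection $\RL(y)+\RL(1-y)=\RL(1)$ one gets $A=\Phi=\sum_{a}\bigl(\RL(1)-\RL(y_{a})\bigr)=\sum_{a}\RL(1-y_{a})=\sum_{i=1}^{d}\RL(x_{i})$.

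For (iii), one inequality is immediate: $f(\eup^{-\epsilon})\ge a_{m}\eup^{-m\epsilon}$ yields $\log a_{m}\le\min_{\epsilon>0}(A/\epsilon+m\epsilon)+o(\sqrt{m})=2\sqrt{Am}+o(\sqrt{m})$, so $\limsup\log^{2}a_{m}/(4m)\le A$. For the reverse inequality I would use that $(a_{m})$ is nondecreasing: each summand $q^{\frac12\mb B\mb^{t}}/(q)_{\mb}$ has nonnegative coefficients that are nondecreasing from degree $\tfrac12\mb B\mb^{t}$ on, and sums and products of power series with nonnegative, eventually nondecreasing coefficients retain the property. A Laplace-transform localisation — the mass of $\sum_{m}a_{m}\eup^{-m\epsilon}=\eup^{A/\epsilon+O(\log\epsilon^{-1})}$ concentrates, via the Gaussian profile $2\sqrt{Am}-m\epsilon=A/\epsilon-\epsilon(\sqrt m-\sqrt{A}/\epsilon)^{2}$, in a window of only polynomially many integers $m$ around $A\epsilon^{-2}$ — then produces, for every small $\epsilon$, an $m^{\ast}=A\epsilon^{-2}(1+o(1))$ with $\log a_{m^{\ast}}\ge 2\sqrt{Am^{\ast}}(1-o(1))$; monotonicity of $(a_{m})$ spreads this to all large $m$. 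Combining, $\log^{2}a_{m}/(4m)\to A=\sum_{i=1}^{d}\RL(x_{i})$.

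The step I expect to be the main obstacle is the Laplace \emph{upper} bound in (i): making the estimate for $\log(1/(q)_m)$ genuinely uniform across the whole range $0\le m\le R/\epsilon$ — including small $m$, where $1/(q)_m$ barely grows, and $m$ of order $1/\epsilon$, where it is close to $1/(q)_{\infty}$ — and verifying that $\sup_{(0,1)^{d}}\Phi$ is attained in the interior and equals $\sum_{i}\RL(x_{i})$ rather than a boundary value, so that the lower and upper bounds match exactly. The monotonicity-based transfer in (iii), which sidesteps the usual circle-method minor-arc estimate, likewise needs careful bookkeeping of the window width.
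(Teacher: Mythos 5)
The paper does not prove this lemma at all: it is recalled verbatim from Kirillov's survey \cite{Kirillov95} (the underlying asymptotic analysis goes back to Richmond--Szekeres and Nahm--Recknagel--Terhoeven), so there is no in-paper argument to compare yours against. Your proof follows the standard route and, as a sketch, it is correct. The Euler--Maclaurin estimate $-\log(q)_m=\epsilon^{-1}\bigl(\RL(1)-\operatorname{Li}_2(q^m)\bigr)+O(\log\epsilon^{-1})$ is indeed uniform in $m$ (the only delicate contribution, $-\log(1-q)$, is exactly the stated $O(\log\epsilon^{-1})$); the critical-point equations $\sum_bB_{ab}\log y_b=\log(1-y_a)$ do become $x_a=\prod_b(1-x_b)^{B_{ab}}$ under $x_a=1-y_a$; and the evaluation $\sup\Phi=\sum_a\RL(x_a)$ via $\RL(y)=\operatorname{Li}_2(y)+\tfrac12\log y\log(1-y)$ and the reflection formula checks out. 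Your concavity argument in the variables $u_a=\log y_a$ (positive definiteness of $B$ plus convexity of $u\mapsto\operatorname{Li}_2(\eup^u)$, together with the boundary behaviour) is a genuine bonus: it proves existence, uniqueness and interiority of the maximiser, which the lemma's statement silently assumes. Two bookkeeping points to pin down in a full write-up: the $\mb=\boldsymbol{0}$ summand is the constant $1$, so $(a_m)$ is only guaranteed nondecreasing for $m\ge1$ (which is all the transfer step needs); and in the final localisation the window width $W$ must satisfy $\epsilon^{3}W^{2}\gg\log\epsilon^{-1}$ while $W=o(\epsilon^{-2})$ (e.g.\ $W=\epsilon^{-7/4}$), which is exactly the ``polynomially many integers'' you invoke. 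With those constants fixed the argument is complete and matches what one finds in Kirillov's survey.
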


If we apply the above lemma to the expression on the left-hand side
of \eqref{tensor} we are led to the system of equations
\[
f_i^{(a)}=\prod_{b=1}^{N-1}\prod_{j=1}^{k-1}(1-f_i^{(b)})^{C_{ab}\min\{i,j\}},
\]
for $1\leq a\leq N-1$ and $1\leq i\leq k-1$.
It is readily verified that this is solved by
\[
f_i^{(a)}=\frac{\sin\bigl(\frac{a\pi}{2k+N-1}\bigr)
\sin\bigl(\frac{(N-a)\pi}{2k+N-1}\bigr)}
{\sin\bigl(\frac{(i+a)\pi}{2k+N-1}\bigr)
\sin\bigl(\frac{(i+N-a)\pi}{2k+N-1}\bigr)}.
\]
Hence, denoting the $q$-series on either side of \eqref{tensor}
by $\sum_{i\geq 0} a_i q^i$, we find that
\[
\frac{1}{\RL(1)}\lim_{m\to\infty} 
\frac{\log^2 a_m}{4m}=\text{LHS}\eqref{dil}.
\]
The right-hand side of \eqref{tensor} is a specialised standard
module of $\A_{2n}^{(2)}$ \cite{Kac74,Kac90}. Exploiting
its modular properties \cite{KP84} we obtain
\[
\frac{1}{\RL(1)}\lim_{m\to\infty} 
\frac{\log^2 a_m}{4m}=\text{RHS}\eqref{dil}
\]
(recall that $N=2n$), leading to \eqref{dil}.

In much the same way an asymptotic analysis of Theorem~\ref{thmFS} 
gives \eqref{dil} for odd $N$. The asymptotics of
Conjecture~\ref{ConB}, on the other hand, can be related to the
odd $K$ case of \eqref{Kir}.

\section{Proof of Theorems~\ref{thm} and \ref{thmp1k2}}

The proof given below uses the theory of Hall--Littlewood polynomials,
and for notation and definitions pertaining to these functions
we refer the reader to \cite{Macdonald95,Warnaar06}.

For $\xb=(x_1,\dots,x_n)$ and $\mu$ a partition of length $l(\mu)\leq n$,
let $Q'_{\mu}(\xb)=Q'_{\mu}(\xb;q)$ be the 
modified Hall--Littlewood polynomial \cite{LLT97}
\[
Q'_{\mu}(\xb)=\sum_{\la} K_{\la\mu}(q)s_{\la}(\xb),
\]
where the $K_{\la\mu}(q)$ are the Kostka--Foulkes polynomials and
$s_{\la}(\xb)$ the Schur functions.
In the ring of symmetric functions, the polynomials $Q'_{\mu}$ form 
the adjoint basis of  $P_{\la}$ with respect to the Hall inner product.
They may also be viewed in $\la$-ring notation \cite{Lascoux03} as
$Q'_{\mu}(\xb)=b_{\mu}(q) P_{\mu}(\xb/(1-q))$.
\begin{theorem}\label{thmA}
Let $C$ be the $\A_n$ Cartan matrix.
Then
\begin{subequations}\label{modA}
\begin{equation}\label{mod1}
\sum_{m=0}^{\infty}\frac{q^m}{(q)_m}\, Q'_{(2^m)}(1^n)
=\sum_{\rb\in\Nat^n}
\frac{q^{\frac{1}{2}\rb C\rb^t}}{(q)_{\rb}}
\end{equation}
and
\begin{equation}\label{mod2}
\sum_{m=0}^{\infty}\frac{q^{2m}}{(q)_m}\, 
Q'_{(2^m)}(\underbrace{1,q^{-1},1,q^{-1},\dots}_{n \text{ terms}})
=\sum_{\rb\in\Nat^n}
\frac{q^{\frac{1}{2}\rb C\rb^t+\abs{\rb}_{-}}}{(q)_{\rb}}.
\end{equation}
\end{subequations}
\end{theorem}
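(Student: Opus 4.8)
The plan is to evaluate the principal (or near-principal) specialisations of the modified Hall--Littlewood polynomials $Q'_{(2^m)}$ in $n$ variables and recognise the resulting generating function as a fermionic sum attached to the $\A_n$ Cartan matrix. The starting point is the $\la$-ring identity $Q'_\mu(\xb)=b_\mu(q)\,P_\mu(\xb/(1-q))$ together with the Cauchy-type/branching structure of Hall--Littlewood polynomials. For a rectangular shape $\mu=(2^m)$ one has an explicit expansion of $Q'_{(2^m)}$ in terms of Hall--Littlewood $P$'s of partitions with parts $\le 2$ (equivalently, in terms of the one-row/one-column data), and each such $P$ in the specialised alphabet $1^n$ or $(1,q^{-1})^{n/2}$ evaluates by the principal specialisation formula $P_\la(1,q,q^2,\dots)=q^{n(\la)}/b_\la(q)$ after a suitable rescaling. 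Carrying this out, the variable $m$ (the number of columns of length $2$) together with the internal summation over how the shape refines should reproduce exactly the $n$-fold sum over $\rb\in\Nat^n$ with quadratic form $\tfrac12\rb C\rb^t$ in the denominator $(q)_{\rb}$.

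Concretely, first I would fix the combinatorial dictionary: write $Q'_{(2^m)}=\sum_\la K_{\la,(2^m)}(q)s_\la$ and note $K_{\la,(2^m)}(q)$ vanishes unless $\la'$ has all parts in $\{0,1,2\}$, i.e.\ $\la=(2^a1^b)$ with $a+b=m$ constrained; the Kostka--Foulkes polynomials for a two-column rectangle are known in closed form (they are essentially $q$-binomials). Substituting the Schur polynomial specialisations $s_{(2^a1^b)}(1^n)$ and $s_{(2^a1^b)}(1,q^{-1},\dots)$ — both again $q$-binomial-type products — turns the left-hand sides of \eqref{mod1}--\eqref{mod2} into explicit $q$-hypergeometric double sums. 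Second, I would massage these double sums: the inner sum (over $a$, say, or over the refinement data) should telescope or resum by the $q$-binomial theorem into the shape $\sum_\rb q^{\frac12\rb C\rb^t+(\text{linear})}/(q)_\rb$; here the tridiagonal structure of the $\A_n$ Cartan matrix is exactly what a chain of nearest-neighbour $q$-binomial contractions produces. Alternatively, and perhaps more cleanly, one can bypass Schur functions entirely and use the well-known fermionic formula expressing $\sum_m \frac{t^m}{(q)_m}Q'_{(2^m)}(\xb)$ via the Hall--Littlewood branching rule applied $n$ times to the alphabet, each step contributing one summation variable $r_i$ and one factor $q^{r_i^2-r_ir_{i+1}}/(q)_{r_i}$.

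For \eqref{mod2} the only extra ingredient is tracking the alternating specialisation $x_{2i-1}=1$, $x_{2i}=q^{-1}$: this shifts the exponent in the principal specialisation formula by a linear term in the column-lengths, which after the resummation materialises as the linear form $\abs{\rb}_{-}=\sum_i(-1)^{i-1}r_i$; the quadratic part is unaffected since it comes from the $q$-powers intrinsic to $Q'$ and the Cartan pairing, not from the evaluation point. The main obstacle I anticipate is \emph{identifying the precise linear/quadratic exponent bookkeeping} — making sure the prefactors $q^m$ and $q^{2m}$ on the left, the $n(\la)$ shifts, the $b_\mu(q)$ normalisation, and the sign pattern of the $q^{-1}$ entries all combine into exactly $\tfrac12\rb C\rb^t$ (resp.\ $\tfrac12\rb C\rb^t+\abs{\rb}_{-}$) with no stray powers of $q$; this is a finite but delicate induction on $n$ via the Hall--Littlewood branching rule, and getting the base case $n=1$ and the inductive step to match the tridiagonal form is where the real work lies. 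Everything else reduces to standard $q$-series manipulations (the $q$-binomial theorem and the Cauchy identity) that I would not spell out in detail.
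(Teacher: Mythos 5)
Your second, ``cleaner'' route is in outline the paper's own proof: peel off one variable of the alphabet at a time with the branching rule for $Q'$, so that the sum over $m$ becomes a sum over chains $0=\mu^{(n)}\subseteq\cdots\subseteq\mu^{(0)}=(2^m)$, one summation variable appearing per step, with the factors $x_i^{\abs{\mu^{(i-1)}-\mu^{(i)}}}$ accounting for the linear term $\abs{\rb}_{-}$ in \eqref{mod2}. But the crux --- your assertion that ``each step contributes one factor $q^{r_i^2-r_ir_{i+1}}/(q)_{r_i}$'' --- is precisely what is not automatic, and you supply no mechanism for it. Two ingredients are missing. First, the branching rule produces \emph{skew} one-variable specialisations $Q'_{\mu^{(i-1)}/\mu^{(i)}}(1)$, and the straight principal specialisation $P_{\la}(1,q,q^2,\dots)=q^{n(\la)}/b_{\la}(q)$ you invoke does not evaluate these; what is needed is Lascoux's formula
\[
Q'_{\la/\mu}(1)=q^{n(\la/\mu)}\prod_{i\geq 1}\qbin{\la_i'-\mu_{i+1}'}{\la_i'-\mu_i'},
\]
which the paper explicitly singles out as the required generalisation of $Q'_{\la}(1)=q^{n(\la)}$. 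Second, for $\la=(2^m)$ a chain element has the form $\mu^{(i)}=(1^{k_i}2^{\,\cdot})$ and so carries \emph{two} parameters per step, not one; after inserting Lascoux's formula one is left with an auxiliary sum over each $k_i$, and only a $q$-Chu--Vandermonde summation collapses it to $q^{-r_ir_{i+1}}/(q)_{r_{i+1}}$, producing the off-diagonal Cartan terms. Deferring this to ``standard $q$-series manipulations'' leaves the actual content of the theorem unproved.

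Your first route also contains a concrete error. With the paper's convention $Q'_{\mu}=\sum_{\la}K_{\la\mu}(q)s_{\la}$, the Kostka--Foulkes polynomial $K_{\la,(2^m)}(q)$ is nonzero exactly when $\la\geq(2^m)$ in dominance order --- a set containing $(2m)$, $(2m-1,1)$ and, for $m\geq 3$, partitions with more than two rows --- not the shapes $\la=(2^a1^b)$ you describe (those are the $\la\leq(2^m)$). The resulting Schur expansion is therefore not the two-parameter $q$-binomial double sum you anticipate, and it is unclear how $n$ independent summation variables tied together by the tridiagonal form $\tfrac12\rb C\rb^t$ would emerge from it. I would abandon that route and carry out the branching-rule computation properly, with Lascoux's skew specialisation and $q$-Chu--Vandermonde as the two pillars.
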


Our proof requires a generalisation of \cite{Macdonald95}
\begin{equation}\label{spec}
Q'_{\la}(1)=q^{n(\la)}
\end{equation}
due to Lascoux.
Extend \eqref{n} to skew shapes by
\[
n(\la/\mu)=\sum_{i\geq 1}\binom{\la_i'-\mu_i'}{2}.
\]
\begin{theorem}[\!{\cite[Theorem 3.1]{Lascoux05}}]
For $\mu\subseteq\la$
\[
Q'_{\la/\mu}(1)=
\frac{q^{n(\la/\mu)}}{b_{\mu}(q)}
\prod_{i=1}^{l(\mu)} (1-q^{\la'_{\mu_i}-i+1}),
\]
and $Q'_{\la/\mu}(1)=0$ otherwise.
\end{theorem}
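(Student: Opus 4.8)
The plan is to replace the single-letter specialisation of the \emph{modified} skew Hall--Littlewood polynomial by the full principal specialisation of the \emph{ordinary} one, and then to evaluate the latter by an induction driven by the branching rule. As a first step I would record the plethystic identity $Q'_{\lambda/\mu}(\xb)=Q_{\lambda/\mu}(\xb/(1-q))$, where $Q_\mu=b_\mu(q)P_\mu$ is the usual Hall--Littlewood $Q$-function and $Q_{\lambda/\mu}$ its skew counterpart. Over $\Rat(q)$ the plethysm $\phi\colon f(\xb)\mapsto f(\xb/(1-q))$ acts on power sums by $p_r\mapsto p_r/(1-q^r)$; it is an algebra homomorphism, and the relation $(\phi\otimes\phi)(p_r\otimes1+1\otimes p_r)=\phi(p_r)\otimes1+1\otimes\phi(p_r)$ shows it is also a coalgebra homomorphism, hence a bialgebra endomorphism of the ring of symmetric functions. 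The $\lambda$-ring formula quoted above is precisely $\phi(Q_\mu)=b_\mu(q)P_\mu(\xb/(1-q))=Q'_\mu$; since $\{Q_\mu\}$ and $\{Q'_\mu\}$ are both bases, applying $\phi$ to the coproduct expansion $Q_\lambda(\xb,\ub)=\sum_\mu Q_{\lambda/\mu}(\xb)Q_\mu(\ub)$ and comparing the coefficients of $Q'_\mu(\ub)$ gives $\phi(Q_{\lambda/\mu})=Q'_{\lambda/\mu}$. In particular $Q'_{\lambda/\mu}=0$ unless $\mu\subseteq\lambda$ (since the same holds for $Q_{\lambda/\mu}$), which takes care of the ``otherwise'' clause; and evaluating at the single letter $\xb=(1)$, for which $(1)/(1-q)$ is the alphabet $1,q,q^2,\dots$, yields $Q'_{\lambda/\mu}(1)=Q_{\lambda/\mu}(1,q,q^2,\dots)$.

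It then remains to prove that, for $\mu\subseteq\lambda$,
\[
Q_{\lambda/\mu}(1,q,q^2,\dots)=\frac{q^{n(\lambda/\mu)}}{b_\mu(q)}\prod_{i=1}^{l(\mu)}\bigl(1-q^{\lambda'_{\mu_i}-i+1}\bigr).
\]
Specialising $x_0=1$ and $x_i=q^i$ $(i\ge1)$ in the Hall--Littlewood branching rule $Q_{\lambda/\mu}(x_0,x_1,x_2,\dots)=\sum_\sigma Q_{\sigma/\mu}(x_0)\,Q_{\lambda/\sigma}(x_1,x_2,\dots)$, and using $Q_{\sigma/\mu}(x_0)=\varphi_{\sigma/\mu}(q)\,x_0^{\abs{\sigma/\mu}}$ for horizontal strips $\sigma/\mu$ (and $0$ otherwise) together with the homogeneity relation $Q_{\lambda/\sigma}(q,q^2,\dots)=q^{\abs{\lambda/\sigma}}Q_{\lambda/\sigma}(1,q,q^2,\dots)$, one obtains, after moving the $\sigma=\mu$ term to the left-hand side, the recursion
\[
\bigl(1-q^{\abs{\lambda/\mu}}\bigr)\,Q_{\lambda/\mu}(1,q,q^2,\dots)
=\sum_{\substack{\mu\subsetneq\sigma\subseteq\lambda\\ \sigma/\mu\ \text{horizontal strip}}}\varphi_{\sigma/\mu}(q)\,q^{\abs{\lambda/\sigma}}\,Q_{\lambda/\sigma}(1,q,q^2,\dots),
\]
in which every $\sigma$ on the right satisfies $\abs{\lambda/\sigma}<\abs{\lambda/\mu}$. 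Since $Q_{\lambda/\lambda}=1$ agrees with the target formula (for $\mu=\lambda$ the displayed product equals $b_\lambda(q)$), one can now argue by induction on $\abs{\lambda/\mu}$: substitute the formula for each $Q_{\lambda/\sigma}$ on the right and check that the resulting finite identity in $q$ holds; the end of the induction, $\mu=\varnothing$, recovers \eqref{spec}. In the first nontrivial instances --- say $\mu=(1)$, or $\lambda$ of small size --- this identity is a short telescoping sum over horizontal strips that one checks directly.

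The main obstacle I foresee is carrying out that telescoping in general: one must sum over every horizontal strip $\sigma/\mu$ (including those opening a new row of $\sigma$), track the quotients $b_\sigma(q)/b_\mu(q)$ as $\sigma$ grows, and reconcile the exponents $n(\lambda/\sigma)=\sum_i\binom{\lambda_i'-\sigma_i'}{2}$ with $n(\lambda/\mu)$ through a column-by-column use of $\binom{a}{2}-\binom{b}{2}=\binom{a-b}{2}+b(a-b)$, so that the nested cancellations assemble exactly $\prod_{i=1}^{l(\mu)}(1-q^{\lambda'_{\mu_i}-i+1})$ and nothing more. Alternatively one may sidestep this by quoting the principal specialisation of skew Hall--Littlewood polynomials from the literature --- it is the natural $\mu$-skew refinement of $P_\lambda(1,q,q^2,\dots)=q^{n(\lambda)}/b_\lambda(q)$; see \cite{Macdonald95} --- leaving only a translation of conventions.
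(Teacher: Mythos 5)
The paper offers no proof of this statement---it is imported verbatim from Lascoux \cite[Theorem 3.1]{Lascoux05}---so your sketch stands or falls on its own. Its preparatory steps are correct: $p_r\mapsto p_r/(1-q^r)$ is indeed a bialgebra endomorphism, so $Q'_{\lambda/\mu}$ is the plethystic image of $Q_{\lambda/\mu}$, whence $Q'_{\lambda/\mu}(1)=Q_{\lambda/\mu}(1,q,q^2,\dots)$ and the vanishing for $\mu\not\subseteq\lambda$ is inherited; the branching recursion
\[
\bigl(1-q^{\abs{\lambda/\mu}}\bigr)Q_{\lambda/\mu}(1,q,q^2,\dots)
=\sum_{\substack{\mu\subsetneq\sigma\subseteq\lambda\\ \sigma/\mu\ \text{hor.\ strip}}}
\varphi_{\sigma/\mu}(q)\,q^{\abs{\lambda/\sigma}}\,Q_{\lambda/\sigma}(1,q,q^2,\dots)
\]
is valid and, with $Q_{\lambda/\lambda}=1$, determines the specialisation uniquely by induction on $\abs{\lambda/\mu}$; and your observation that the product collapses to $b_\lambda(q)$ when $\mu=\lambda$ is right.

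The gap is that the inductive step---substituting $q^{n(\lambda/\sigma)}b_\sigma(q)^{-1}\prod_{i=1}^{l(\sigma)}(1-q^{\lambda'_{\sigma_i}-i+1})$ for each $Q_{\lambda/\sigma}$ and showing the sum over all horizontal strips $\sigma/\mu$ reproduces $(1-q^{\abs{\lambda/\mu}})\,q^{n(\lambda/\mu)}b_\mu(q)^{-1}\prod_{i=1}^{l(\mu)}(1-q^{\lambda'_{\mu_i}-i+1})$---is the entire mathematical content of the theorem, and you do not carry it out: you name it as the obstacle, check only ``first nontrivial instances'', and offer as a fallback ``quoting the principal specialisation of skew Hall--Littlewood polynomials from the literature'', which is quoting the very statement to be proved. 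As written this is a plan, not a proof. If you want to complete it along these lines, note that the assertion is equivalent to the paper's reformulation \eqref{qb}, $Q'_{\lambda/\mu}(1)=q^{n(\lambda/\mu)}\prod_{i\geq1}\qbin{\lambda'_i-\mu'_{i+1}}{\lambda'_i-\mu'_i}$, which by the displayed identity of \cite{Butler94,Delsarte48,Dyubyuk48,Yeh48} is the classical subgroup-counting formula $q^{n(\lambda)-n(\mu)}\alpha_\lambda(\mu;q^{-1})$; reducing to that known result (or to Lascoux's own $\lambda$-ring argument for $P_\lambda(\xb\pm1)$) would close the gap, whereas the telescoping as you have set it up still has to be done.
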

Before we show how this implies Theorem~\ref{thmA}
we note that the above may be rewritten as
\begin{equation}\label{qb}
Q'_{\la/\mu}(1)=q^{n(\la/\mu)} \prod_{i\geq 1}
\qbin{\la_i'-\mu_{i+1}'}{\la_i'-\mu_i'},
\end{equation}
where $\qbin{m}{k}$ is a $q$-binomial coefficient.
A classical result in the theory of abelian $p$-groups
states that if $\alpha_{\la}(\mu;p)$ is the number
of subgroups of type $\mu$ in a finite abelian $p$-group of type 
$\la$, then \cite{Butler94,Delsarte48,Dyubyuk48,Yeh48}
\[
q^{n(\la)-n(\mu)}\alpha_{\la}(\mu;q^{-1})=q^{n(\la/\mu)}
\prod_{i\geq 1} \qbin{\la_i'-\mu_{i+1}'}{\la_i'-\mu_i'}.
\]
This obviously implies that
\[
Q'_{\la/\mu}(1)=
q^{n(\la)-n(\mu)} \alpha_{\la}(\mu;q^{-1}),
\]
a result we failed to find in the literature. 

\begin{proof}[Proof of Theorems~\ref{thmA}]
For $n=1$ \eqref{modA} is trivial since, by \eqref{spec},
\[
Q'_{(2^m)}(1)=q^{m^2-m}.
\]

To prove \eqref{modA} for general $n$ we apply
\[
Q'_{\la}(\xb)=
\sum \prod_{i=1}^n x_i^{\abs{\mu^{(i-1)}-\mu^{(i)}}}
Q'_{\mu^{(i-1)}/\mu^{(i)}}(1),
\]
where the sum is over 
\begin{equation}\label{chain}
0=\mu^{(n)}\subseteq\dots\subseteq\mu^{(1)}\subseteq\mu^{(0)}=\la.
\end{equation}
Hence
\begin{align*}
\text{LHS}\eqref{mod1}
&=\sum_{m=0}^{\infty} \sum
\frac{q^m}{(q)_m}\, 
\prod_{i=1}^n Q'_{\mu^{(i-1)}/\mu^{(i)}}(1) \\
\intertext{and}
\text{LHS}\eqref{mod2}
&=\sum_{m=0}^{\infty} \sum
\frac{q^{2m}}{(q)_m}\, 
\prod_{i=1}^n q^{(-1)^i \abs{\mu^{(i)}}} Q'_{\mu^{(i-1)}/\mu^{(i)}}(1),
\end{align*}
where the inner sums on the right are over \eqref{chain} with $\la=(2^m)$.
If we now change the above double-sums to a sum over
$k_1,\dots,k_{n-1}$ and $\rb=(r_1,\dots,r_n)$ by setting
\[
\mu^{(i)}=(1^{k_i}2^{r_{i+1}+\cdots+r_n-k_i-\cdots-k_{n-1}}),
\quad 0\leq i\leq n,
\]
(where $k_0=k_n:=0$), and insert \eqref{qb}, we arrive at
\begin{align*}
\text{LHS}\eqref{mod1}&
=\sum_{\rb\in\Nat^n}^{\infty} \frac{q^{||\rb||^2}}{(q)_{r_1}}
\prod_{i=1}^{n-1}\sum_{k_i=0}^{\min\{r_i,r_{i+1}\}}
\frac{q^{k_i(k_i-r_i-r_{i+1})}}{(q)_{r_{i+1}-k_i}}\qbin{r_i}{k_i}
\intertext{and}
\text{LHS}\eqref{mod2}&
=\sum_{\rb\in\Nat^n}^{\infty} \frac{q^{||\rb||^2+\abs{\rb}_{-}}}{(q)_{r_1}}
\prod_{i=1}^{n-1}\sum_{k_i=0}^{\min\{r_i,r_{i+1}\}}
\frac{q^{k_i(k_i-r_i-r_{i+1})}}{(q)_{r_{i+1}-k_i}}\qbin{r_i}{k_i}.
\end{align*}
By the $q$-Chu--Vandermonde sum \cite[(II.6)]{GR04} the sum over $k_i$ yields 
\[
\frac{q^{-r_ir_{i+1}}}{(q)_{r_{i+1}}},
\]
thus proving \eqref{modA}.
\end{proof}

Theorem~\ref{thmA} combined with Milne's C$_n$ analogue of the 
Rogers--Selberg identity implies Theorems~\ref{thm} and \ref{thmp1k2} 
as outlined below.

Let $\Delta(\xb)$ be the C$_n$ Vandermonde product
\[
\Delta(\xb)=
\prod_{1\leq i<j\leq n}
(1-x_i/x_j) \prod_{1\leq i\leq j\leq n}
(1-x_ix_j)
\]
and, for $\ub\in\Nat^n$, let $n(\ub)=\sum_{i=1}^n(i-1)u_i$.
\begin{theorem}[\!{\cite[Corollary 2.21]{Milne94}}]\label{thmMilne}
\begin{multline*}
\sum_{\ub\in\Nat^n}
\frac{\Delta(\xb q^{\ub})}{\Delta(\xb)}
\prod_{i,j=1}^n \frac{(x_ix_j)_{u_i}}{(qx_i/x_j)_{u_i}} \\
\times
(-1)^{n\abs{\ub}} q^{n(\ub)+\frac{1}{2}(n+4)||\ub||^2-\frac{1}{2}n\abs{\ub}}
\prod_{i=1}^n x_i^{(n+4)u_i-\abs{\ub}}  \\
=\prod_{i=1}^n (qx_i^2)_{\infty}\prod_{1\leq i<j\leq n}(qx_ix_j)_{\infty}
\sum_{m=0}^{\infty}\frac{q^m}{(q)_m}\, Q'_{(2^m)}(\xb).
\end{multline*}
\end{theorem}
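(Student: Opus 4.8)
The final statement is Milne's $C_n$ Rogers--Selberg identity \cite[Corollary~2.21]{Milne94}, so strictly speaking one simply cites it; but let me sketch how a self-contained proof would go and where its weight lies. The key structural observation is that the summand on the left is a $C_n$ (symplectic) \emph{very-well-poised} term: $\Delta(\xb q^{\ub})/\Delta(\xb)$ is the $\mathrm{Sp}(2n)$ Weyl-denominator quotient, the factor $\prod_{i,j}(x_ix_j)_{u_i}/(qx_i/x_j)_{u_i}$ is the standard $C_n$ well-poised block, and the Gaussian $q^{n(\ub)+\frac12(n+4)\|\ub\|^2-\frac12 n\abs{\ub}}\prod_i x_i^{(n+4)u_i-\abs{\ub}}$ is \emph{quadratic} in $\ub$. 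In one variable this is precisely the classical Rogers--Selberg series --- the output of Watson's ${}_8\phi_7\to{}_4\phi_3$ transformation (equivalently, one step up the Bailey chain) applied to the ${}_6\phi_5$ summation, followed by letting numerator parameters tend to $0$. The plan is therefore to run the $C_n$ version of exactly this manoeuvre and identify the two limiting sides.

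Concretely: first take as seed the $C_n$ ${}_6\phi_5$ summation (Gustafson's $\mathrm{Sp}(2n)$ summation), or a $C_n$ Watson-type ${}_8\phi_7$ transformation of the kind developed by Milne and Gustafson, which evaluates a terminating $C_n$ very-well-poised series as an explicit product of one-variable $q$-shifted factorials. Next apply one pass of the $C_n$ Bailey transform to free up a Bailey-pair parameter, then specialise that parameter and send the remaining numerator parameters to $0$; one side collapses to exactly the $\ub$-series on the left of the theorem, while on the other the product $\prod_i(qx_i^2)_\infty\prod_{i<j}(qx_ix_j)_\infty$ is what survives of the ${}_6\phi_5$ product, multiplied by a residual single sum over $m$. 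Finally, identify that residual sum with $\sum_{m\ge 0}q^m Q'_{(2^m)}(\xb)/(q)_m$: expand $Q'_{(2^m)}(\xb)$ by the branching rule $Q'_\la(\xb)=\sum\prod_i x_i^{\abs{\mu^{(i-1)}-\mu^{(i)}}}Q'_{\mu^{(i-1)}/\mu^{(i)}}(1)$ over chains $0=\mu^{(n)}\subseteq\cdots\subseteq\mu^{(0)}=(2^m)$, insert Lascoux's skew principal-specialisation formula \cite[Theorem~3.1]{Lascoux05}, and collapse the resulting one-variable $q$-binomial sums by $q$-Chu--Vandermonde --- this is exactly the computation in the proof of Theorem~\ref{thmA}, only with the alphabet $\xb$ kept generic.

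An alternative, transformation-free route would be induction on $n$: multiply both sides by $\Delta(\xb)$, regard each as a formal power series in $q$ whose coefficients are $W(C_n)$-skew Laurent polynomials in $x_1,\dots,x_n$, isolate the dependence on $x_n$, and reduce the $C_n$ sum to a $C_{n-1}$ sum times a one-variable nonterminating very-well-poised evaluation, matching the recursion that $Q'_{(2^m)}(\xb)$ satisfies under restriction of the alphabet. In either approach the delicate points are (a) justifying the manipulations of the nonterminating $C_n$ series --- most cleanly by placing $x_1,\dots,x_n$ on the unit torus, so everything converges $q$-adically, or by proving a terminating truncation first and then taking a limit --- and (b) the Hall--Littlewood bookkeeping in the last step. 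I expect (b) to be the real obstacle: once the series identity is set up correctly, the $q$-series content is a routine consequence of the $C_n$ Bailey apparatus, whereas recognising the $C_n$ limiting multisum as the generating function of the two-column modified Hall--Littlewood polynomials $Q'_{(2^m)}$ is where essentially all the combinatorial work sits.
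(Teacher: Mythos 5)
The paper offers no proof of this result at all: it is imported verbatim from \cite[Corollary 2.21]{Milne94}, so your primary move --- simply citing Milne --- is exactly what the paper does. Your supplementary sketch (the $C_n$ Bailey/Watson route for the series identity, and the identification of the residual multisum with $\sum_m q^m Q'_{(2^m)}(\xb)/(q)_m$ via the branching rule, Lascoux's skew specialisation and $q$-Chu--Vandermonde) is consistent with Milne's actual method and with the computation the authors do carry out in the proof of Theorem~\ref{thmA}, but none of it is needed to match the paper.
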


For $\ub\in\Nat^n$ denote by $\ub^{+}$ the unique partition in the
$\Symm_n$ orbit of $\ub$. For example, if $\ub=(2,0,3,2,0)$ then 
$\ub^{+}=(3,2,2,0,0)=(3,2,2)$.
Similarly, let $W_n$ be the Weyl group of C$_n$, i.e., 
$W_n=(\Z_2)^n\rtimes\Symm_n$,
acting on $\vb\in\Z^n$ by permutation and sign-reversal of its
components.
Then $\vb^{\ast}$ denotes the unique partition in the $W_n$ orbit of $\vb$.
For example, if $\vb=(-2,0,-3,2,0)$ then 
$\vb^{\ast}=(3,2,2,0,0)=(3,2,2)$.

Let us now denote the summand on the left of Theorem~\ref{thmMilne}
by $L_n(\ub,\xb)$ and let us denote the right-hand sides of
\eqref{eqc1} and \eqref{eqS} for $k=p=2$ by $R_{2n-1}(\vb)$ 
and $R_{2n}(\vb)$, respectively.
Then, for $\la$ a partition of length at most $n$,
\[
\lim_{\xb\to (1^n)} 
\sum_{\substack{\ub\in\Nat^n \\ \ub^{+}=\la}} L_n(\ub,\xb)
=
\chi\bigl(l(\la)\leq \lfloor (n+1)/2 \rfloor\bigr)
\sum_{\substack{\vb\in\Z^n \\ \vb^{\ast}=\la}} R_n(\vb).
\]
Here $\chi$ is the truth function (and not the character \eqref{char}).
As an immediate consequence of the above we find that
\[
\lim_{\xb\to (1^n)} 
\sum_{\ub\in\Nat^n} L_n(\ub,\xb)
=
\sum_{\vb\in\Z^n} R_n(\vb).
\]
By Theorem~\ref{thmMilne}
\begin{align*}
\lim_{\xb\to (1^n)} &
\sum_{\ub\in\Nat^n} L_n(\ub,\xb) \\
&=
\lim_{\xb\to (1^n)} 
\prod_{i=1}^n (qx_i^2)_{\infty}\prod_{1\leq i<j\leq n}(qx_ix_j)_{\infty}
\sum_{m=0}^{\infty}\frac{q^m}{(q)_m}\, Q'_{(2^m)}(\xb)\\
&=
(q)_{\infty}^{n(n+1)/2}\sum_{m=0}^{\infty}\frac{q^m}{(q)_m}\, Q'_{(2^m)}(1^n).
\end{align*}
Thanks to \eqref{mod1} this proves the $k=p=2$ instances of
\eqref{eqc1} and \eqref{eqS}.

In much the same way one obtains \eqref{eqc1} and \eqref{eqS} with $k=2$ 
and $p=1$ by taking the $\xb\to (q^{1/2},q^{-1/2},q^{1/2},\dots)$ limit 
in Theorem~\ref{thmMilne}.

\bibliographystyle{amsplain}

\end{document}